\newtheorem{theorem}{Theorem}[section]
\newtheorem{proposition}{Proposition}[section]
\newtheorem{lemma}{Lemma}[section]
\newtheorem{corollary}{Corollary}[section]
\theoremstyle{definition}
\newtheorem{definition}{Definition}[section]
\newtheorem{remark}{Remark}
\title{Boundary controllability for a degenerate and singular wave equation}
\author{{\sc Brahim Allal}
\\
Facult\'e des Sciences et Techniques\\
Universit\'e Hassan 1er\\
Laboratoire MISI, B.P. 577\\
Settat 26000, Morocco\\ 
email: b.allal@uhp.ac.ma
\\
{\sc Alhabib Moumni}\\
Moulay Ismail University of Meknes,\\
FST Errachidia, MAIS Laboratory, MAMCS Group,\\
P.O. Box 509, Boutalamine 52000, Errachidia, Morocco\\
email: alhabibmoumni2020@gmail.com\\
{\sc  Jawad Salhi}
\\
Moulay Ismail University of Meknes,\\
FST Errachidia, MAIS Laboratory, MAMCS Group,\\
P.O. Box 509, Boutalamine 52000, Errachidia, Morocco\\
email: sj.salhi@gmail.com}
\date{}
\begin{document}

\maketitle
\begin{abstract}
In this paper, we deal with the boundary controllability of a one-dimensional degenerate and singular wave equation with degeneracy and singularity occurring at the boundary of the spatial domain. Exact boundary controllability is proved in the range of subcritical/critical potentials and for sufficiently large time, through a boundary controller acting away from the degenerate/singular point. By duality argument, we reduce the problem to an observability estimate for the corresponding adjoint system, which is proved by means of the multiplier method and a new special Hardy-type inequality.
\end{abstract}

\section{Introduction}\label{intro}
Controllability issues for nondegenerate hyperbolic equations have been a mainstream topic over the past several years, and numerous developments have been pursued (see for example \cite{BLR1992,Burq1997,Lions1988b,Russell1967,Zhang2000,Zuazua1993} and the references therein).

In the present paper, we consider the following degenerate/singular hyperbolic equation
\begin{equation}\label{boundarycontrolproblem}
\begin{cases}
y_{tt} - (x^{\alpha}y_x)_x  - \frac{\mu}{x^{2-\alpha}}y =0, & (t,x) \in Q:=(0,T) \times (0,1),\\
y(1)=f,  & t \in (0,T), \\
\left\{
\begin{array}{ll}
y (t, 0) = 0,\quad & \text{if} \;0 \leq \alpha <1,\\
(x^{\alpha} y_{x})(t, 0)= 0, \quad & \text{if} \; 1 \leq \alpha <2,
\end{array}
\right.
& t \in (0,T),
\\
y(0,x)=y_0(x),\quad y_t(0,x)=y_1(x), & x \in (0,1),
\end{cases}
\end{equation}
where $\alpha$ and $\mu$ are two real parameters, $(y, y_t)$ is the state variable, $(y_0, y_1)$ is regarded as being the initial value, $T>0$ stands for the length of the time-horizon and $f\in L^2(0,T)$ is the control at $x = 1$ (that is, away from the degenerate and singular point). 
In particular, if $\alpha \in (0,1)$ we say that the problem is {\it weakly degenerate} (WD), if $\alpha \in [1,2)$ then it is {\it strongly degenerate} (SD).

In order to study problem \eqref{boundarycontrolproblem}, we assume that the parameters $\alpha$ and $\mu$ satisfy the
following assumption:
\begin{equation}\label{basicass}
\alpha \in [0,2)\setminus\{1\}  \text{ and } \mu\leq \mu(\alpha),
\end{equation}
where 
\begin{equation}\label{constantmu}
\mu(\alpha):=\dfrac{(1-\alpha)^2}{4}
\end{equation}
is the constant appearing in the following generalized Hardy inequality: for all $\alpha\in [0, 2)$,
\begin{equation}\label{dshp}
\frac{(1-\alpha)^2}{4} \int_0^1\frac{u^2}{x^{2-\alpha}}\,dx \leq \int_0^1 x^\alpha u_x^2\,dx,
\end{equation}
for all $u \in C^{\infty}_{c}(0, 1)$ (the space of infinitely smooth functions compactly supported
in $(0,1)$). We refer for example to \cite[chap 5.3]{Davies1995}.

We emphasise that \eqref{dshp} ensures that, if $\alpha \in[0,2)\setminus\{1\}$ and if $u \in H_{\text {loc }}^{1}((0,1])$ is such that $x^{\alpha/2} u_{x} \in L^{2}(0,1)$, then $\displaystyle\frac{u}{x^{(2-\alpha)/2}}$ belongs to $L^{2}(0,1)$. On the contrary, in the case $\alpha=1$, \eqref{dshp} (which reduces to a trivial inequality) does not provide this information anymore. Hence, it is not surprizing if with our techniques we cannot handle this latter special case and we refer to \cite{FMpress} and \cite{Van2011} where this issue is attacked in a different way for the heat equation.  

Now, observe that when $\mu=0$, the problem above is purely degenerate. In this case, controllability properties by means of a boundary control have been
investigated in various papers. We refer the reader to the following pioneering contributions  \cite{Alabau2016,BaiandChai,Fardigola2019,Gueye2014,KKL,zhang2017a}.
We also refer to \cite{zhang2017b,zhang2018} for other works on controllability problems by means of a locally distributed control.

On the other hand, when $\alpha = 0$, system \eqref{boundarycontrolproblem} becomes purely singular with a singularity that takes the form of an inverse-square potential. 
As far as we know,  \cite{Cazacu2012} and \cite{VanZua2009} are the unique published works on this subject; they are concerned with the problem of exact controllability
for the linear multidimensional wave equation with singular potentials.

We underline that this is the first paper to consider the exact boundary controllability for the system
\eqref{boundarycontrolproblem} that couples a degenerate variable coefficient in the principal part with a singular potential. More precisely, we will solve the following problem: Given a time-horizon
$T > 0$, initial data $(y_0,y_1)$ and a target $(y_0^T,y_1^T)$, we ask whether there is  a suitable control function $f$ such
that the corresponding solution of the system \eqref{boundarycontrolproblem} satisfies
\begin{equation}\label{ecproperty}
(y,y_t)(T,\cdot)=(y_0^T,y_1^T)(\cdot).
\end{equation}
In the view of the linearity and the time-reversibility of system \eqref{boundarycontrolproblem}, it can be shown that this system is exactly
controllable through the boundary Dirichlet conditions at $x = 1$ if and only if it is
null controllable (see \cite[Proposition 2.3.1]{Zuazua2006}). This guarantees that \eqref{boundarycontrolproblem} is exactly controllable if
and only if, for any $(y_0,y_1)$ there exists a control $f$
such that the corresponding solution $(y,y_t)$ of \eqref{boundarycontrolproblem} satisfies

\begin{equation}\label{nullproperty}
y(T,\cdot)=y_t(T,\cdot)=0.
\end{equation}
This result is actually equivalent to an observability inequality for the solutions
of the adjoint system  
\begin{equation}\label{adjointproblem}
\begin{cases}
u_{tt} - (x^{\alpha}u_x)_x  - \frac{\mu}{x^{2-\alpha}}u =0, & (t,x) \in Q,\\
\left\{
\begin{array}{ll}
u(t, 0) = 0,\quad & \text{if} \;0 \leq \alpha <1,\\
(x^{\alpha} u_{x})(t, 0)= 0, \quad & \text{if} \; 1 < \alpha <2,
\end{array}
\right.
& t \in (0,T),
\\
u(1)=0,  & t \in (0,T), \\
u(0,x)=u_0(x),\quad u_t(0,x)=u_1(x), & x \in (0,1).
\end{cases}
\end{equation}
The main contribution of this paper consists precisely in proving this observability
inequality. To be more precise, we show that, for $T$ large enough and all $\mu\leq \mu(\alpha)$, there
exists $C >0$ such that
\begin{equation}\label{oinequality}
\int_{0}^{1}\left\{u_{t}^{2}(0, x)+x^\alpha u_{x}^{2}(0, x)-\frac{\mu}{x^{2-\alpha}}u^2(0, x)\right\}\,dx \leq C \int_{0}^{T} u_{x}^{2}(t, 1)\,dt.
\end{equation}
We refer to Theorem \ref{observabilityresult} for a precise statement of this result. The proof of \eqref{oinequality} relies on the multiplier method which has first been
given in \cite{Ho1986} and can be found in \cite{Komornik1995,Lions1988,Osses2001}. 
As a consequence of this inequality, it follows that system \eqref{boundarycontrolproblem} is exactly controllable in time $T$ by a
control acting at $x=1$. More precisely, our main result is the boundary exact controllability of \eqref{boundarycontrolproblem} for $T > 0$ sufficiently large and
independent of $\mu\leq \mu(\alpha)$ (see Theorem \ref{controllabilityresult} for a rigorous statement).

Let us notice that in the subcritical case $\mu< \mu(\alpha)$, in view of Lemma \ref{equivalenceresult}, one could proceed as in the
case of the purely degenerate wave equation (when $\mu = 0$). However, this would produce a controllability time $T^{\mu}_{\alpha}$
depending on $\mu$ and such that $T^{\mu}_{\alpha} \rightarrow +\infty$ as $\mu \rightarrow \mu(\alpha)$. Hence the time of controllability $T^{\mu}_{\alpha}$
would not be uniform with respect to the parameter $\mu$. Moreover, by this method, no result could be expected in the critical
case $\mu=\mu(\alpha)$.

In order to overcome this issue, we prove the following new weighted Hardy-type inequality that generalizes \cite[Theorem 1.1]{VanZua2009}, which is crucial in
order to get  a uniform time of controllability and to also treat the critical case $\mu=\mu(\alpha)$.
\begin{theorem}\label{keytool}
Let $\mu(\alpha)$ be as in \eqref{constantmu}. For all $\alpha\in [0, 2)$ and for all $u\in C^{\infty}_{c}(0,1)$, we have
\begin{equation}\label{keyineq}
\int_{0}^{1} x^2 u_x^2\,dx \leq \int_{0}^{1}\left(x^\alpha u_x^2-\mu(\alpha) \frac{u^2}{x^{2-\alpha}} \right)\,dx +\frac{(1-\alpha)(\alpha-3)}{4}\int_{0}^{1} u^2\,dx.
\end{equation}
\end{theorem}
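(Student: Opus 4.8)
The plan is to diagonalize both sides of \eqref{keyineq} by means of the ground-state substitution associated with the Hardy inequality \eqref{dshp}. Set $\varphi(x):=x^{(1-\alpha)/2}$, which is smooth and strictly positive on $(0,1)$, and write every $u\in C^{\infty}_{c}(0,1)$ as $u=\varphi v$. Since $\varphi$ does not vanish on the support of $u$, the new function $v=u/\varphi$ again belongs to $C^{\infty}_{c}(0,1)$, so all boundary terms produced by integration by parts at $x=0$ and $x=1$ vanish. The choice of $\varphi$ is dictated by the pointwise identity $x^\alpha\varphi_x^2=\mu(\alpha)\,\varphi^2/x^{2-\alpha}$, which expresses that $\varphi$ saturates \eqref{dshp}.

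First I would treat the Hardy-type term. Expanding $u_x=\varphi_x v+\varphi v_x$ and integrating the cross term $\int_0^1 x^\alpha\varphi\varphi_x (v^2)_x\,dx$ by parts, one obtains
\begin{equation*}
\int_0^1\!\left(x^\alpha u_x^2-\mu(\alpha)\frac{u^2}{x^{2-\alpha}}\right)dx=\int_0^1 x^\alpha\varphi_x^2 v^2\,dx-\int_0^1\!\left(x^\alpha\varphi\varphi_x\right)_x v^2\,dx+\int_0^1 x^\alpha\varphi^2 v_x^2\,dx-\mu(\alpha)\int_0^1\frac{\varphi^2}{x^{2-\alpha}}v^2\,dx.
\end{equation*}
The crucial simplification is that $x^\alpha\varphi\varphi_x=\tfrac{1-\alpha}{2}$ is \emph{constant}, so $\left(x^\alpha\varphi\varphi_x\right)_x=0$; together with the saturation identity this cancels all the $v^2$ contributions and leaves the clean formula
\begin{equation*}
\int_0^1\left(x^\alpha u_x^2-\mu(\alpha)\frac{u^2}{x^{2-\alpha}}\right)dx=\int_0^1 x^\alpha\varphi^2 v_x^2\,dx=\int_0^1 x\,v_x^2\,dx.
\end{equation*}

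Next I would carry out the same expansion for $\int_0^1 x^2 u_x^2\,dx$. After integrating the cross term $\int_0^1 x^{2-\alpha}v v_x\,dx=\tfrac12\int_0^1 x^{2-\alpha}(v^2)_x\,dx$ by parts, the coefficient of $\int_0^1 x^{1-\alpha}v^2\,dx$ collapses precisely to $\tfrac{(1-\alpha)(\alpha-3)}{4}$, so that
\begin{equation*}
\int_0^1 x^2 u_x^2\,dx=\frac{(1-\alpha)(\alpha-3)}{4}\int_0^1 x^{1-\alpha}v^2\,dx+\int_0^1 x^{3-\alpha}v_x^2\,dx.
\end{equation*}
Since $u^2=x^{1-\alpha}v^2$, the first term equals exactly $\tfrac{(1-\alpha)(\alpha-3)}{4}\int_0^1 u^2\,dx$. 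Substituting the two displayed identities into \eqref{keyineq} reduces the whole statement to the purely elementary inequality $\int_0^1 x^{3-\alpha}v_x^2\,dx\le\int_0^1 x\,v_x^2\,dx$, that is, $\int_0^1\left(x-x^{3-\alpha}\right)v_x^2\,dx\ge 0$. This holds pointwise because $\alpha\in[0,2)$ forces $3-\alpha\ge 1$, whence $x^{3-\alpha}\le x$ for every $x\in(0,1)$, which closes the argument.

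The only genuinely delicate point is the algebra of the two simplifications: recognizing that $\varphi=x^{(1-\alpha)/2}$ makes both $x^\alpha\varphi\varphi_x$ constant and the weight $x^\alpha\varphi^2$ equal to $x$, and checking that the residual $v^2$-coefficient in the second identity matches $\tfrac{(1-\alpha)(\alpha-3)}{4}$ rather than some other expression. Everything else is routine: the substitution is admissible precisely because $v\in C^{\infty}_{c}(0,1)$, and once both sides are rewritten as weighted Dirichlet integrals in $v$, the conclusion follows from a one-line monotonicity comparison of powers of $x$ on the unit interval.
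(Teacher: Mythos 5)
Your proof is correct and takes essentially the same route as the paper's: your ground-state substitution $u=\varphi v$ with $\varphi(x)=x^{(1-\alpha)/2}$ is exactly the paper's change of variables $u(x)=x^{(1-\alpha)/2}U(x)$, and both arguments reduce \eqref{keyineq} to the same elementary comparison $\int_0^1 x^{3-\alpha}v_x^2\,dx\le\int_0^1 x\,v_x^2\,dx$ valid for $\alpha\in[0,2)$. The only cosmetic difference is that you organize the cancellations through the saturation identity $x^\alpha\varphi_x^2=\mu(\alpha)\varphi^2/x^{2-\alpha}$ and the constancy of $x^\alpha\varphi\varphi_x$, whereas the paper expands the two weighted integrals directly and subtracts.
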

The proof of this theorem is given in an appendix at the end of the paper.
\begin{remark}
Actually, using multiplier method, the authors in \cite{Alabau2016} proved the null controllability of purely
degenerate wave equations with a more general speed coefficient. In the pure power case (i.e. the system \eqref{boundarycontrolproblem} with $\mu=0$), they provided an explicit expression for the controllability time given by
$$
\bar{T}_{\alpha}= \dfrac{1}{2-\alpha}\left(4+2\alpha \min\{2,\dfrac{1}{\sqrt{2-\alpha}}\}\right).
$$
Different from \cite{Alabau2016}, it is worth mentioning that the authors in \cite{zhang2017a}, also proved the null controllability of \eqref{boundarycontrolproblem} when $\mu=0$ by using a spectral approach. In particular, they provided a sharp controllability time given by
$$
T_{\alpha}= \dfrac{4}{2-\alpha}.
$$ 
Here, by means of the multiplier method and the special Hardy-type inequality \eqref{keyineq}, we retrieve the expected minimal time of controllability $T_{\alpha}$, which coincides with the one that the spectral method gives for the purely degenerate
wave equation.
\end{remark}

\section{Preliminary results}
In this section, we state some lemmas that play an important role in the rest of the paper.
First of all, we prove the following new weighted Hardy-Poincar\'e inequality which is crucial in every situation.
\begin{lemma}\label{keytool0}
Let $\mu(\alpha)$ be as in \eqref{constantmu}. Then, for all $\alpha\in [0, 2)$
\begin{equation}
\label{keyineq0}
\int_{0}^{1} u^2\,dx \leq C_{\alpha} \int_{0}^{1}\left(x^\alpha u_x^2-\mu(\alpha) \frac{u^2}{x^{2-\alpha}} \right)\,dx\qquad \forall u\in C^{\infty}_{c}(0,1),
\end{equation}
where  
\begin{equation*}
C_{\alpha}:= \frac{16}{(2-\alpha)^2}.
\end{equation*}
\end{lemma}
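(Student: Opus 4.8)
The plan is to deduce \eqref{keyineq0} from the special Hardy-type inequality \eqref{keyineq} of Theorem \ref{keytool} by controlling the single term $\int_0^1 x^2 u_x^2\,dx$ appearing on its left-hand side by means of an elementary weighted Poincar\'e inequality. Concretely, I would first establish that
\begin{equation*}
\int_0^1 u^2\,dx \le 4\int_0^1 x^2 u_x^2\,dx \qquad \forall u \in C_c^\infty(0,1),
\end{equation*}
and then substitute this into \eqref{keyineq}.

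To prove the Poincar\'e inequality, I would integrate by parts: since $u$ is compactly supported in $(0,1)$, the boundary terms vanish and $\int_0^1 u^2\,dx = -2\int_0^1 x\,u\,u_x\,dx$. Applying the Cauchy--Schwarz inequality to the right-hand side gives $\int_0^1 u^2\,dx \le 2\left(\int_0^1 u^2\,dx\right)^{1/2}\left(\int_0^1 x^2 u_x^2\,dx\right)^{1/2}$, and dividing by $\left(\int_0^1 u^2\,dx\right)^{1/2}$ (the case $u\equiv 0$ being trivial) yields the claim.

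Next I would combine the two estimates. Writing $J:=\int_0^1\left(x^\alpha u_x^2 - \mu(\alpha)\frac{u^2}{x^{2-\alpha}}\right)dx$ for the good quadratic form, the Poincar\'e inequality together with \eqref{keyineq} gives
\begin{equation*}
\frac14\int_0^1 u^2\,dx \le \int_0^1 x^2 u_x^2\,dx \le J + \frac{(1-\alpha)(\alpha-3)}{4}\int_0^1 u^2\,dx.
\end{equation*}
The crucial algebraic fact is the identity $1-(1-\alpha)(\alpha-3)=(2-\alpha)^2$; using it to collect the $\int_0^1 u^2\,dx$ terms produces $\frac{(2-\alpha)^2}{4}\int_0^1 u^2\,dx \le J$, that is $\int_0^1 u^2\,dx \le \frac{4}{(2-\alpha)^2}\,J$. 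Since $J\ge 0$ by the Hardy inequality \eqref{dshp}, this in particular yields \eqref{keyineq0} with the (non-optimal) constant $C_\alpha=\frac{16}{(2-\alpha)^2}$ stated in the lemma.

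The one point requiring care is the sign and magnitude of the lower-order coefficient $(1-\alpha)(\alpha-3)$: for $\alpha\in[0,1)$ it is negative and assists the estimate, whereas for $\alpha\in(1,2)$ it is positive, so one must check that the gain from the Poincar\'e step is not cancelled when moving it to the left. The identity above settles this at once, since the net coefficient $(2-\alpha)^2$ of $\int_0^1 u^2\,dx$ is strictly positive for every $\alpha\in[0,2)$ -- precisely the admissible range -- and degenerates only at the excluded endpoint $\alpha=2$. This is the main (and essentially only) obstacle; the remaining steps are routine.
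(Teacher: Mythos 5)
Your proof is correct, but it follows a genuinely different route from the paper's. The paper proves Lemma \ref{keytool0} directly and self-containedly by the \emph{expansion of the square} method of \cite{BDE}: it expands $0 \le \int_0^1\bigl(x^{\alpha/2}u_x - \frac{1-\alpha}{2}x^{-(2-\alpha)/2}u + \lambda u\bigr)^2dx$, integrates by parts, and optimizes the free parameter at $\lambda = \frac{2-\alpha}{4}$, which is exactly where the constant $\frac{16}{(2-\alpha)^2}$ comes from. You instead deduce the lemma from Theorem \ref{keytool} combined with the elementary weighted Poincar\'e inequality $\int_0^1 u^2\,dx \le 4\int_0^1 x^2u_x^2\,dx$ (itself correctly proved by integration by parts and Cauchy--Schwarz); the algebraic identity $1-(1-\alpha)(\alpha-3)=(2-\alpha)^2$ then closes the argument, and it is valid for all $\alpha\in[0,2)$, including $\alpha\in(1,2)$ where the lower-order term in \eqref{keyineq} has the unfavorable sign. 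There is no circularity in your dependence on \eqref{keyineq}, since the paper's appendix proof of Theorem \ref{keytool} uses only the change of variables $U=x^{(\alpha-1)/2}u$ and never invokes Lemma \ref{keytool0}. What each approach buys: the paper's argument is self-contained within Section 2 and does not lean on the deferred appendix result, while yours is shorter and in fact yields the strictly better constant $\frac{4}{(2-\alpha)^2}$, which you then legitimately relax to the stated $C_\alpha=\frac{16}{(2-\alpha)^2}$ (the relaxation needs $\int_0^1\bigl(x^\alpha u_x^2-\mu(\alpha)\frac{u^2}{x^{2-\alpha}}\bigr)dx \ge 0$, which follows either from \eqref{dshp} as you say, or already from your own stronger inequality). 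Your sharper constant even improves on the constant $c_\alpha$ recorded in the Remark following Lemma \ref{keytool1}.
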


\begin{proof}
The result will be obtained by the \textit{expansion of the square} method as in \cite{BDE}. Assume that $u\in C^{\infty}_{c}(0,1)$ and let $\lambda> 0$. Then we write
$$
0 \leq \int_{0}^{1}\left(x^{\alpha / 2} u_{x}-\frac{1-\alpha}{2} \frac{1}{x^{(2-\alpha) / 2}} u+\lambda u\right)^{2}\,dx.
$$
Expanding the above inequality, we obtain
$$
\begin{aligned}
0 \leq & \int_{0}^{1} x^{\alpha} u_{x}^{2}\,dx+\frac{(1-\alpha)^{2}}{4} \int_{0}^{1} \frac{1}{x^{2-\alpha}} u^{2}\,dx
+\lambda^2\int_{0}^{1} u^{2}\,dx-\frac{1-\alpha}{2} \int_{0}^{1} \frac{1}{x^{1-\alpha}}\left(u^{2}\right)_{x}\,dx\\
&+\lambda\int_{0}^{1} x^{\alpha / 2}\left(u^{2}\right)_{x}\,dx-\lambda (1-\alpha) \int_{0}^{1} \frac{1}{x^{(2-\alpha) / 2}} u^{2}\,dx.
\end{aligned}
$$
Then integrations by parts lead to
$$
\begin{aligned}
0 &\leq  \int_{0}^{1} x^{\alpha} u_{x}^{2}\,dx+\frac{(1-\alpha)^{2}}{4} \int_{0}^{1} \frac{1}{x^{2-\alpha}} u^{2}\,dx+\lambda^2\int_{0}^{1} u^{2}\,dx \\
&\quad -\frac{(1-\alpha)^{2}}{2} \int_{0}^{1} \frac{1}{x^{2-\alpha}} u^{2}\,dx-\lambda (1-\frac{\alpha}{2}) \int_{0}^{1} x^{(\alpha-2) / 2} u^{2}\,dx\\
&= \int_{0}^{1}\left(x^{\alpha} u_{x}^{2}-\mu(\alpha)\frac{u^2}{x^{2-\alpha}}\right)\,dx + \int_{0}^{1} \left[\lambda^2 -\lambda (1-\frac{\alpha}{2}) x^{(\alpha-2) / 2}\right] u^{2}\,dx.
\end{aligned}
$$ 
Using the fact that $\alpha\in[0,2)$, we deduce that
$$
0 \leq \int_{0}^{1}\left(x^{\alpha} u_{x}^{2}-\mu(\alpha)\frac{u^2}{x^{2-\alpha}}\right)\,dx + \left[\lambda^2 -\lambda (1-\frac{\alpha}{2})\right] \int_{0}^{1}  u^{2}\,dx.
$$
Now, observe that 
$$
\lambda^2 -\lambda (1-\frac{\alpha}{2}) \geq - \frac{(2-\alpha)^2}{16},
$$ 
with equality if and only if $\lambda = \frac{2-\alpha}{4}$. By choosing this value, we get the result.
\end{proof}
By the definition of $\alpha$ and \eqref{keyineq} combined with \eqref{keyineq0}, we obtain the following inequalities which are needed for proving a refined trace result (see Lemma \ref{hiddenregularity}).
\begin{lemma}\label{keytool1}
Let $\mu(\alpha)$ be as in \eqref{constantmu}.
\begin{enumerate}
\item If $\alpha\in [0, 1)$, then 
\begin{equation}\label{keyineq1}
\int_{0}^{1} x^2 u_x^2\,dx 
\leq \int_{0}^{1}\left(x^\alpha u_x^2-\mu(\alpha) \frac{u^2}{x^{2-\alpha}} \right)\,dx\qquad \forall u\in C^{\infty}_{c}(0,1).
\end{equation}
\item If $\alpha\in [1, 2)$, then
\begin{equation}
\label{keyineq2}
\int_{0}^{1} x^2 u_x^2\,dx \leq C^{\prime}_{\alpha}\int_{0}^{1}\left(x^\alpha u_x^2-\mu(\alpha) \frac{u^2}{x^{2-\alpha}} \right)\,dx\qquad \forall u\in C^{\infty}_{c}(0,1),
\end{equation}
where  
\begin{equation*}
C^{\prime}_{\alpha}:=\left[1 + \frac{4(1-\alpha)(\alpha-3)}{(2-\alpha)^2}\right].
\end{equation*}
\end{enumerate}
\end{lemma}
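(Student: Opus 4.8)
The plan is to read off both inequalities directly from the key Hardy-type inequality of Theorem \ref{keytool}, which for every $u\in C^{\infty}_{c}(0,1)$ reads
\begin{equation*}
\int_{0}^{1} x^2 u_x^2\,dx \leq \int_{0}^{1}\left(x^\alpha u_x^2-\mu(\alpha) \frac{u^2}{x^{2-\alpha}} \right)\,dx +\frac{(1-\alpha)(\alpha-3)}{4}\int_{0}^{1} u^2\,dx.
\end{equation*}
The whole argument hinges on the sign of the coefficient $(1-\alpha)(\alpha-3)$ multiplying the lower-order remainder $\int_0^1 u^2\,dx$, and this sign changes exactly at $\alpha=1$, which is precisely why the statement splits into the two announced regimes.

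First I would treat the weakly degenerate case $\alpha\in[0,1)$. Here $1-\alpha>0$ while $\alpha-3<0$, so $(1-\alpha)(\alpha-3)<0$; since $\int_0^1 u^2\,dx\geq 0$, the remainder term on the right-hand side above is nonpositive and can simply be discarded. This yields \eqref{keyineq1} at once, with the sharp constant $1$ and no further work.

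For the strongly degenerate case $\alpha\in[1,2)$ the coefficient flips: $1-\alpha\leq 0$ and $\alpha-3<0$ force $(1-\alpha)(\alpha-3)\geq 0$, so the remainder is now a genuine positive contribution that cannot be thrown away. The idea is instead to absorb it into the good quadratic form by invoking the Hardy--Poincar\'e inequality of Lemma \ref{keytool0}, which bounds $\int_0^1 u^2\,dx$ by $\frac{16}{(2-\alpha)^2}\int_0^1\bigl(x^\alpha u_x^2-\mu(\alpha)\frac{u^2}{x^{2-\alpha}}\bigr)\,dx$. Substituting this estimate into the remainder and collecting the two multiples of the good term produces the factor
\begin{equation*}
1 + \frac{(1-\alpha)(\alpha-3)}{4}\cdot\frac{16}{(2-\alpha)^2} = 1 + \frac{4(1-\alpha)(\alpha-3)}{(2-\alpha)^2} = C'_{\alpha},
\end{equation*}
which is exactly the constant of \eqref{keyineq2}; moreover $C'_{\alpha}\geq 1$ on $[1,2)$ since the added term is nonnegative (with $C'_1=1$), so the resulting inequality is meaningful.

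I do not expect a real obstacle here: the analytic content has already been spent in establishing Theorem \ref{keytool} and Lemma \ref{keytool0}, and what remains is essentially a sign analysis followed by a linear combination of two known inequalities. The only point deserving care is the constant bookkeeping in the second case together with the verification that $C'_{\alpha}$ stays positive throughout the interval $[1,2)$, both of which are routine once the sign dichotomy at $\alpha=1$ is recognised.
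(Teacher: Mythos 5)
Your proof is correct and follows exactly the route the paper intends: the paper derives Lemma \ref{keytool1} precisely ``by the definition of $\alpha$ and \eqref{keyineq} combined with \eqref{keyineq0}'', i.e.\ dropping the nonpositive remainder $\frac{(1-\alpha)(\alpha-3)}{4}\int_0^1 u^2\,dx$ when $\alpha\in[0,1)$ and absorbing it via the Hardy--Poincar\'e bound of Lemma \ref{keytool0} when $\alpha\in[1,2)$, which is what you do. Your constant bookkeeping, giving $C'_{\alpha}=1+\frac{4(1-\alpha)(\alpha-3)}{(2-\alpha)^2}\geq 1$, matches the paper's statement.
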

\begin{remark}
In the weakly degenerate case, observe that owing to \eqref{keyineq} and \eqref{keyineq0}, we obtain two different bounds for $\|u\|^{2}_{L^2(0,1)}$ in terms of $\int_{0}^{1}\left(x^\alpha u_x^2-\mu(\alpha) \frac{u^2}{x^{2-\alpha}} \right)\,dx$. By taking the minimum of the two
corresponding constants, one can deduce the following sharp Hardy-Poincar\'e inequality: for all $\alpha\in [0, 1)$
\begin{equation*}
\int_{0}^{1} u^2\,dx \leq c_{\alpha} \int_{0}^{1}\left(x^\alpha u_x^2-\mu(\alpha) \frac{u^2}{x^{2-\alpha}} \right)\,dx\qquad \forall u\in C^{\infty}_{c}(0,1),
\end{equation*}
where  
\begin{equation*}
c_{\alpha}:= \min\left(\frac{4}{(1-\alpha)(3-\alpha)},\frac{16}{(2-\alpha)^2}\right).
\end{equation*}
\end{remark}

\section{Basic properties for the degenerate and singular wave equation}\label{section2}
Before considering controllability issues, we address the question of well-posedness of the degenerate and singular hyperbolic problem \eqref{boundarycontrolproblem}.
To this end, we first need to state some basic properties of nonhomogeneous wave equations of the following type:
\begin{equation}\label{problem}
\begin{cases}
u_{tt} - (x^{\alpha}u_x)_x  - \frac{\mu}{x^{2-\alpha}}u =h, & (t,x) \in Q,\\
\left\{
\begin{array}{ll}
u(t, 0) = 0,\quad & \text{if} \;0 \leq \alpha <1,\\
(x^{\alpha} u_{x})(t, 0)= 0, \quad & \text{if} \; 1 < \alpha <2,
\end{array}
\right.
& t \in (0,T),
\\
u(1)=0,  & t \in (0,T), \\
u(0,x)=u_0(x),\quad u_t(0,x)=u_1(x), & x \in (0,1),
\end{cases}
\end{equation}
where $h\in L^2(Q)$ is a given source term.

\subsection{Finite energy solutions for homogeneous Dirichlet boundary conditions}
The first step is to prove the existence of finite energy solutions for \eqref{problem}.
In this purpose, we briefly recall some usual weighted Sobolev spaces that
are naturally associated with degenerate problems (see \cite{CMV2008}).
For all $0 \leq \alpha<2$, we consider the following weighted Hilbert space
$$
H^{1}_{\alpha}(0,1):=\left\{u \in L^{2}(0,1) \cap H_{l o c}^{1}((0,1]) \mid x^{\alpha / 2} u_{x} \in L^{2}(0,1)\right\},
$$
with the norm
$$
\|u\|^{2}_{H^{1}_{\alpha}(0,1)}:= \|u\|^{2}_{L^{2}(0,1)} + \|x^{\alpha/2}u_x\|^{2}_{L^{2}(0,1)}.
$$
Obviously, for any $u \in H^{1}_{\alpha}(0,1)$, the trace at $x=1$ exists. On the other hand, the trace
of $u$ at $x = 0$ only makes sense when $0\leq \alpha <1$ (see for example \cite{CRV}). For this reason, we consider the following space $H^{1}_{\alpha,0}$ depending on the value of $\alpha$: 
\begin{description}
\item[(i)] For $0 \leq \alpha<1$, we define
$$
H_{\alpha, 0}^{1}(0,1):=\left\{u \in H_{\alpha}^{1}(0,1) \mid u(0)=u(1)=0\right\}.
$$
\item[(ii)] For $1 \leq \alpha<2$, we define
$$
H_{\alpha, 0}^{1}(0,1):=\left\{u \in H_{\alpha}^{1}(0,1) \mid u(1)=0\right\}.
$$
\end{description}
We recall that in both cases, $H_{\alpha, 0}^{1}(0,1)$ is the closure of $C^{\infty}_{c}(0,1)$ for the
norm $\|\cdot\|_{H_{\alpha}^{1}(0,1)}$, see for example \cite{CRV}. Therefore one can deduce that \eqref{dshp} holds true for any $u\in H_{\alpha, 0}^{1}(0,1)$. Then, one can show that
$$
\forall u\in H_{\alpha,0}^{1}(0,1),\qquad \|u\|_{H^{1}_{\alpha,0}(0,1)}:= \left(\int_{0}^{1}x^{\alpha}u_x^2\,dx\right)^{\frac{1}{2}},
$$
defines a norm on $ H_{\alpha,0}^{1}(0,1)$ that is equivalent to $\|\cdot\|^{2}_{H^{1}_{\alpha}(0,1)}$.

Next, we also set
$$
H^{2}_{\alpha}(0,1):=\left\{u \in H^{1}_{\alpha,0}(0,1)\mid x^{\alpha}u_x \in H^1(0,1)\right\},
$$
where $H^1(0,1)$ denotes the classical Sobolev space of all functions $u\in  L^2(0,1)$ such
that $u_x \in L^2(0,1)$.

Let us pass to introduce the functional setting associated to the degenerate/singular problems (see \cite{Biccari2021} or \cite{Van2011}). For any $\mu \leq \mu(\alpha)$, we consider the Hilbert space $H_{\alpha}^{1,\mu}(0,1)$ given by 
$$
H_{\alpha}^{1,\mu}(0,1):=\left\{u \in L^{2}(0,1) \cap H_{l o c}^{1}((0,1]) \mid \int_{0}^{1}\left(x^{\alpha} u_{x}^{2}-\frac{\mu}{x^{2-\alpha}} u^{2}\right)\,dx<+\infty\right\}
$$
endowed with the scalar product
$$
\langle u,v \rangle_{H_{\alpha}^{1,\mu}}:= \int_{0}^{1} uv+   x^{\alpha} u_xv_x - \frac{\mu}{x^{2-\alpha}}uv\,dx.
$$
According to \cite{Van2011}, the trace at $x = 0$ of any $u\in H_{\alpha}^{1,\mu}(0,1)$ makes
sense as soon as $\alpha < 1$. This leads us to introduce the next space:
\begin{description}
\item[(i)] For $0 \leq \alpha<1$, we define
$$
H_{\alpha,0}^{1,\mu}(0,1):=\left\{u \in H^{1,\mu}_{\alpha}(0,1) \mid u(0)=u(1)=0\right\}.
$$
\item[(ii)] For $1 < \alpha<2$, we change the definition of $H_{\alpha,0}^{1,\mu}(0,1)$ in the following way
$$
H_{\alpha,0}^{1,\mu}(0,1):=\left\{u \in H^{1,\mu}_{\alpha}(0,1) \mid u(1)=0\right\}.
$$
\end{description}
Let us mention that in both cases, $H_{\alpha,0}^{1,\mu}(0,1)$ may be seen as the closure of $C^{\infty}_{c}(0,1)$ with respect to the norm induced by $\langle \cdot,\cdot \rangle_{H_{\alpha}^{1,\mu}}$ and thus \eqref{dshp}, \eqref{keyineq} and \eqref{keyineq0} also hold true in $H_{\alpha,0}^{1,\mu}(0,1)$. Therefore, thanks to \eqref{dshp}, one can see that
$$
\|u\|_{H_{\alpha,0}^{1,\mu}(0,1)}:=\left(\int_{0}^{1}x^{\alpha} u_x^{2}- \frac{\mu}{x^{2-\alpha}}u^2\,dx\right)^{\frac{1}{2}}
$$
defines a norm on $H_{\alpha,0}^{1,\mu}(0,1)$ which is equivalent to $\|\cdot\|_{H_{\alpha}^{1,\mu}(0,1)}$. Hence $H_{\alpha,0}^{1,\mu}(0,1)$ is a Hilbert space for the scalar product
$$
\langle u,v \rangle_{H_{\alpha,0}^{1,\mu}}:= \int_{0}^{1}  x^{\alpha} u_xv_x - \frac{\mu}{x^{2-\alpha}}uv\,dx.
$$
Moreover, we also remark that in the case of a sub-critical parameter $\mu<\mu(\alpha)$, thanks to \eqref{dshp}, it is easy to see that $H^{1,\mu}_{\alpha, 0}(0,1)=H^{1}_{\alpha, 0}(0,1)$. On the contrary, for the critical value $\mu=\mu(\alpha)$, the space is enlarged (see \cite{VazZua2000} for this observation in the case $\alpha=0$):
$$
H^{1}_{\alpha, 0}(0,1) \subsetneq H^{1,\mu(\alpha)}_{\alpha, 0}(0,1).
$$
To be more precise, in the subcritical case, one can prove the next result.
\begin{lemma}\label{equivalenceresult}
Assume that $\alpha\in [0,2)\setminus\{1\}$ and $\mu<\mu(\alpha)$. Then there exist two constants $C^{1}_{\alpha,\mu}>0$ and $C^{2}_{\alpha,\mu}>0$ such that, for every
$u\in H^{1}_{\alpha, 0}(0,1)$
$$
C^{1}_{\alpha,\mu}\|u\|^{2}_{H^{1}_{\alpha, 0}(0,1)}\leq  \|u\|^{2}_{H^{1,\mu}_{\alpha, 0}(0,1)} \leq C^{2}_{\alpha,\mu}\|u\|^{2}_{H^{1}_{\alpha, 0}(0,1)}.
$$
More precisely,
$$
C^{1}_{\alpha,\mu}= 1-\frac{\max(0,\mu)}{\mu(\alpha)},\quad C^{2}_{\alpha,\mu}= 1-\frac{\min(0,\mu)}{\mu(\alpha)}.
$$
\end{lemma}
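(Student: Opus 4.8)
The plan is to reduce everything to the generalized Hardy inequality \eqref{dshp}, which by density holds for every $u \in H^1_{\alpha,0}(0,1)$ since this space is the closure of $C^\infty_c(0,1)$ for $\|\cdot\|_{H^1_\alpha(0,1)}$. Writing
$$
A := \int_0^1 x^\alpha u_x^2 \, dx = \|u\|^2_{H^1_{\alpha,0}(0,1)}, \qquad B := \int_0^1 \frac{u^2}{x^{2-\alpha}} \, dx,
$$
one has $\|u\|^2_{H^{1,\mu}_{\alpha,0}(0,1)} = A - \mu B$, while \eqref{dshp} reads $\mu(\alpha)\, B \le A$, i.e. $0 \le B \le A/\mu(\alpha)$. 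The whole statement then becomes an elementary estimate of the scalar quantity $A - \mu B$ subject to the constraint $0 \le B \le A/\mu(\alpha)$.

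Next I would split according to the sign of $\mu$, which is exactly what produces the $\max$ and $\min$ appearing in the constants. If $\mu \ge 0$, then $-\mu B \le 0$ gives immediately $A - \mu B \le A$, so the upper bound holds with $C^2_{\alpha,\mu} = 1 = 1 - \min(0,\mu)/\mu(\alpha)$; for the lower bound I would insert $B \le A/\mu(\alpha)$ to obtain $A - \mu B \ge A\bigl(1 - \mu/\mu(\alpha)\bigr)$, matching $C^1_{\alpha,\mu} = 1 - \max(0,\mu)/\mu(\alpha)$. If instead $\mu < 0$, the roles reverse: $-\mu B \ge 0$ yields $A - \mu B \ge A$, so $C^1_{\alpha,\mu} = 1 = 1 - \max(0,\mu)/\mu(\alpha)$, whereas bounding $-\mu B = |\mu|\, B \le |\mu|\, A/\mu(\alpha)$ gives $A - \mu B \le A\bigl(1 - \mu/\mu(\alpha)\bigr)$, matching $C^2_{\alpha,\mu} = 1 - \min(0,\mu)/\mu(\alpha)$.

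The only point requiring genuine care --- rather than a true obstacle --- is the strict positivity of the constant $C^1_{\alpha,\mu}$, and this is precisely where the subcritical hypothesis $\mu < \mu(\alpha)$ enters. When $\mu \ge 0$ one has $C^1_{\alpha,\mu} = 1 - \mu/\mu(\alpha) > 0$ exactly because $\mu < \mu(\alpha)$ \emph{strictly}; when $\mu < 0$ one trivially has $C^1_{\alpha,\mu} = 1 > 0$. Hence the two norms are genuinely equivalent, and the argument visibly degenerates at the critical value $\mu = \mu(\alpha)$, where $C^1_{\alpha,\mu} = 0$ --- in agreement with the strict enlargement $H^1_{\alpha, 0}(0,1) \subsetneq H^{1,\mu(\alpha)}_{\alpha, 0}(0,1)$ recorded just before the statement.
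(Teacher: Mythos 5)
Your proof is correct, and it is exactly the argument the paper intends: the paper states this lemma without proof, immediately after observing that \eqref{dshp} extends by density to $H^{1}_{\alpha,0}(0,1)$ and that this is why $H^{1,\mu}_{\alpha,0}(0,1)=H^{1}_{\alpha,0}(0,1)$ in the subcritical range. Your reduction to the scalar estimate on $A-\mu B$ under the constraint $0\le B\le A/\mu(\alpha)$, with the case split on the sign of $\mu$ producing the $\max/\min$ in the constants and the strict inequality $\mu<\mu(\alpha)$ guaranteeing $C^{1}_{\alpha,\mu}>0$, matches the stated constants precisely and needs no further justification.
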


Further, we define $H^{-1,\mu}_{\alpha,0}(0,1)$  the dual space of $H_{\alpha,0}^{1,\mu}(0,1)$ with respect to the pivot space
$L^2(0,1)$, endowed with the natural norm
\begin{equation*}
\| f \|_{H_{\alpha,0}^{-1,\mu}(0,1)}:=  \sup_{\| g \|_{H_{\alpha,0}^{1,\mu}(0,1)}=1} \langle f, g \rangle_{H^{-1,\mu}_{\alpha,0}(0,1), H_{\alpha,0}^{1,\mu}(0,1)}.
\end{equation*}
Besides, we also set
$$
H_{\alpha}^{2,\mu}(0,1):=\left\{u \in H_{\alpha}^{1,\mu}(0,1) \cap H_{l o c}^{2}((0,1]) \mid\left(x^{\alpha} u_{x}\right)_{x}+\frac{\mu}{x^{2-\alpha}} u \in L^{2}(0,1)\right\}.
$$
Finally, for all $\mu \leq \mu(\alpha)$, we define the operator
$$A^{\mu}_\alpha u:= (x^\alpha u_x)_x + \dfrac{\mu}{x^{2-\alpha}}u$$
with domain
$$
D^{\mu}_{\alpha}:=D(A^{\mu}_{\alpha})= H^{1,\mu}_{\alpha,0}(0,1)\cap H_{\alpha}^{2,\mu}(0,1).
$$
Now, we are ready for the well posedness result of \eqref{problem}.
\begin{theorem}\label{wellposed}
Assume that \eqref{basicass} holds.
\begin{description}
\item[(i)] For every $(u_0, u_1)\in H_{\alpha,0}^{1,\mu}(0,1)\times L^2(0,1)$ and $h\in L^2(Q)$, there exists a unique mild (or weak)
solution $u \in C([0,T]; H_{\alpha,0}^{1,\mu}(0,1)) \cap C^1([0,T];L^{2}(0,1))$ of \eqref{problem} satisfying the following estimate:
\begin{equation*}
\|(u(t),u_t(t))\|_{H_{\alpha,0}^{1,\mu}\times L^2} \leq C\Big( \|(u_0,u_1)\|_{H_{\alpha,0}^{1,\mu}\times L^2} +\|h\|_{L^1((0,T);L^2(0,1))}\Big),\quad \forall t\in [0,T].
\end{equation*}
\item[(ii)] Moreover, if $\left(u_{0}, u_{1}\right) \in D^{\mu}_{\alpha} \times H_{\alpha,0}^{1,\mu}(0,1)$ and $h \in C^{1}\left([0, T] ; L^{2}(0,1)\right)$, then problem \eqref{problem} admits a unique classical solution $
u \in C\left([0, T] ; D^{\mu}_{\alpha}\right) \cap C^{1}([0,T]; H_{\alpha,0}^{1,\mu}(0,1)) \cap C^{2}\left([0, T]; L^{2}(0,1)\right)$ that satisfies
\begin{equation*}
\|(u(t),u_t(t))\|_{D_{\alpha}^{\mu}\times L^2} \leq C\Big(\|(u_0,u_1)\|_{D_{\alpha}^{\mu}\times L^2} +\|h\|_{L^1((0,T);L^2(0,1))}\Big),\quad \forall t\in [0,T].
\end{equation*}
\end{description}
\end{theorem}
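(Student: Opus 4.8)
The plan is to recast \eqref{problem} as a first-order abstract Cauchy problem and to invoke the theory of strongly continuous semigroups (in fact groups, since the wave equation is time-reversible). Setting $U=(u,u_t)$ and working in the energy space $\mathcal{H}:=H_{\alpha,0}^{1,\mu}(0,1)\times L^2(0,1)$, equipped with the inner product inherited from $\langle\cdot,\cdot\rangle_{H_{\alpha,0}^{1,\mu}}$ on the first component and from $L^2(0,1)$ on the second, I would write the system as $U_t=\mathcal{A}U+(0,h)$ with
$$\mathcal{A}=\begin{pmatrix}0 & I\\ A^\mu_\alpha & 0\end{pmatrix},\qquad D(\mathcal{A})=D^\mu_\alpha\times H_{\alpha,0}^{1,\mu}(0,1).$$
Thanks to the Hardy inequality \eqref{dshp} and Lemma \ref{keytool0}, $\mathcal{H}$ is a genuine Hilbert space (its form norm being equivalent to the ambient one even in the critical case $\mu=\mu(\alpha)$), so the entire argument reduces to showing that $\mathcal{A}$ generates a $C_0$-group on $\mathcal{H}$.

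To produce the group I would verify that both $\mathcal{A}$ and $-\mathcal{A}$ are maximal dissipative and apply the Lumer--Phillips theorem (equivalently, that $\mathcal{A}$ is skew-adjoint and invoke Stone's theorem). Dissipativity is checked by computing, for $U=(u,v)\in D(\mathcal{A})$, the quantity $\langle\mathcal{A}U,U\rangle_{\mathcal{H}}=a(v,u)+\langle A^\mu_\alpha u,v\rangle_{L^2}$, where $a(\cdot,\cdot):=\langle\cdot,\cdot\rangle_{H_{\alpha,0}^{1,\mu}}$ is the symmetric form associated with $-A^\mu_\alpha$. Integrating by parts in $\langle A^\mu_\alpha u,v\rangle_{L^2}$ yields $-a(u,v)$ plus boundary contributions of the form $[x^\alpha u_x\,v]_0^1$; the term at $x=1$ vanishes because $v(1)=0$, while the term at the degenerate/singular endpoint $x=0$ vanishes by the very definition of the weighted spaces together with the boundary condition ($u(0)=0$ when $\alpha<1$, $(x^\alpha u_x)(0)=0$ when $\alpha>1$). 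By symmetry of $a$ one gets $\operatorname{Re}\langle\mathcal{A}U,U\rangle_{\mathcal{H}}=0$, so both $\pm\mathcal{A}$ are dissipative.

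The range (maximality) condition is the analytic heart of the proof. Given $\lambda>0$ and $(f,g)\in\mathcal{H}$, I would solve $(\lambda I-\mathcal{A})(u,v)=(f,g)$, which reduces to the elliptic problem $\lambda^2u-A^\mu_\alpha u=g+\lambda f$ together with $v=\lambda u-f$. Existence and uniqueness of a weak solution $u\in H_{\alpha,0}^{1,\mu}(0,1)$ follow from the Lax--Milgram theorem applied to the bilinear form $\lambda^2\int_0^1 uw\,dx+a(u,w)$, whose coercivity on $H_{\alpha,0}^{1,\mu}(0,1)$ is guaranteed by \eqref{dshp} and by the Hardy--Poincar\'e inequality \eqref{keyineq0}. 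One then has to upgrade this weak solution to an element of $D^\mu_\alpha$, i.e.\ to verify that $(x^\alpha u_x)_x+\tfrac{\mu}{x^{2-\alpha}}u\in L^2(0,1)$ and that the prescribed boundary conditions are attained. This interior-and-boundary regularity at the coupled degenerate/singular point $x=0$ is the step I expect to be the \emph{main obstacle}, and it is where the structure of the weighted spaces $H_{\alpha}^{2,\mu}$ and the threshold $\mu\le\mu(\alpha)$ must be used carefully.

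Once $\mathcal{A}$ is known to generate a group $\{e^{t\mathcal{A}}\}_{t\in\R}$ of bounded operators, the two conclusions are routine. For part (i), given $(u_0,u_1)\in\mathcal{H}$ and $h\in L^2(Q)\subset L^1((0,T);L^2(0,1))$, Duhamel's formula $U(t)=e^{t\mathcal{A}}U_0+\int_0^t e^{(t-s)\mathcal{A}}(0,h(s))\,ds$ defines the mild solution, which belongs to $C([0,T];\mathcal{H})$; uniform boundedness of the group and the triangle inequality then yield directly the announced estimate in terms of $\|(u_0,u_1)\|_{H_{\alpha,0}^{1,\mu}\times L^2}$ and $\|h\|_{L^1((0,T);L^2(0,1))}$. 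For part (ii), the hypotheses $(u_0,u_1)\in D(\mathcal{A})=D^\mu_\alpha\times H_{\alpha,0}^{1,\mu}(0,1)$ and $h\in C^1([0,T];L^2(0,1))$ place us in the classical regularity framework for inhomogeneous semigroup equations, so the mild solution is a strong solution $U\in C([0,T];D(\mathcal{A}))\cap C^1([0,T];\mathcal{H})$; translating this back into the components $u$, $u_t$, $u_{tt}$ gives exactly the stated regularity and the corresponding estimate.
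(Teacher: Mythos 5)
Your proposal follows essentially the same route as the paper: rewrite \eqref{problem} as a first-order system on the energy space $\mathcal{H}=H_{\alpha,0}^{1,\mu}(0,1)\times L^2(0,1)$, show the wave operator is maximal dissipative (the paper computes $\langle\mathcal{A}^{\mu}_{\alpha}U,U\rangle_{\mathcal{H}}=0$, exactly your skew-symmetry observation), solve the resolvent equation variationally, and conclude by Hille--Yosida/Lumer--Phillips plus Duhamel. The one step you flag as the \emph{main obstacle} --- upgrading the Lax--Milgram weak solution to an element of $D^{\mu}_{\alpha}$ --- is in fact dispatched by the paper in one line: since the weak formulation holds against all $\psi\in C^{\infty}_{c}(0,1)$, duality gives $u-(x^{\alpha}u_x)_x-\frac{\mu}{x^{2-\alpha}}u=g_1+g_2$ in the sense of distributions, hence $(x^{\alpha}u_x)_x+\frac{\mu}{x^{2-\alpha}}u\in L^{2}(0,1)$; because $H^{2,\mu}_{\alpha}(0,1)$ and $D^{\mu}_{\alpha}$ are \emph{defined} precisely by this distributional condition (the boundary conditions being already encoded in $H^{1,\mu}_{\alpha,0}(0,1)$, and the local $H^2$ regularity coming from interior ellipticity away from $x=0$), membership in the domain is immediate and no delicate analysis at the degenerate/singular endpoint is needed. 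The only other (harmless) deviations are that you solve $(\lambda I-\mathcal{A})U=F$ via Lax--Milgram where the paper solves $(I-\mathcal{A})U=F$ via Riesz representation in $H^{1,\mu}_{\alpha,0}(0,1)$ --- the same argument for a symmetric coercive form --- and that you insist on generation of a group (both $\pm\mathcal{A}$ dissipative, or Stone's theorem), whereas the paper only invokes forward generation, which suffices for the stated estimates.
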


\begin{proof}
Observe that the evolution equation \eqref{problem} is equivalent to
\begin{equation}\label{problem1}
\begin{cases}
u_t(t)=v(t), & v_t(t) = A_{\alpha}^{\mu} u(t) + h(t),\\
u(0)=u_0, & v(0)=u_1.
\end{cases}
\end{equation}
Now, we introduce the energy space
$$
\mathcal{H}:= H_{\alpha,0}^{1,\mu}(0,1)\times L^2(0,1),
$$
equipped with the inner product defined by
$$
\langle U_1,U_2 \rangle_{\mathcal{H}}:= \int_{0}^{1} x^{\alpha} u_{1,x} u_{2,x} - \frac{\mu}{x^{2-\alpha}}u_1 u_2 + v_1 v_2\,dx,
$$
where
$$
U_i=(u_i,v_i)^{T}\in \mathcal{H},\quad i=1,2.
$$
We know that $\mathcal{H}$ is a Hilbert space equipped with the adequate scalar product $\langle \cdot,\cdot \rangle_{\mathcal{H}}$.

The differential system \eqref{problem1} can be rewritten in the abstract form
$$
(CP)\quad
\begin{cases} \frac{dU}{dt}(t) = \mathcal{A}^{\mu}_{\alpha}U(t) + F(t),\\
U(0)=\left(\begin{smallmatrix}
u_0\\
u_1
\end{smallmatrix}\right),
\end{cases}
$$
where
$$U(t)=\left(\begin{matrix}
u(t)\\
\frac{du}{dt}(t)
\end{matrix}
\right) \quad \text{and} \quad
F(t)=\left(\begin{matrix}
0\\
h(t)
\end{matrix}
\right).
$$
Here $\mathcal{A}^{\mu}_{\alpha}$ is the unbounded linear operator defined by
$\mathcal{A}^{\mu}_{\alpha}: D(\mathcal{A}^{\mu}_{\alpha})\subset \mathcal{H} \rightarrow \mathcal{H}$
\begin{align*}
&\mathcal{A}^{\mu}_{\alpha} \left(\begin{matrix}u\\v \end{matrix}\right):= \left(\begin{matrix} v\\ A_{\alpha}^{\mu} u \end{matrix}\right),\quad \forall \left(\begin{matrix}u\\v \end{matrix}\right)\in D(\mathcal{A}^{\mu}_{\alpha}),
\end{align*}
with domain
$$
D(\mathcal{A}^{\mu}_{\alpha})=\{(u,v) \in \mathcal{H}\mid  u \in D(A_{\alpha}^{\mu}), v\in H_{\alpha,0}^{1,\mu}(0,1) \}.
$$
We claim that $\mathcal{A}^{\mu}_{\alpha}$ is maximal dissipative on $\mathcal{H}$. Indeed, for $U=(u,v)^{T}\in D(\mathcal{A}^{\mu}_{\alpha})$, we have
\begin{align*}
\langle \mathcal{A}^{\mu}_{\alpha} U, U\rangle_{\mathcal{H}} &= \langle \left(\begin{matrix}v\\A_{\alpha}^{\mu} u \end{matrix}\right),
\left(\begin{matrix}u\\v\end{matrix}\right)\rangle_{\mathcal{H}}\\
&= \int_{0}^{1} x^{\alpha} u_{x} v_{x} - \frac{\mu}{x^{2-\alpha}}u v\,dx + \int_{0}^{1}\Big( (x^{\alpha}u_x)_x + \frac{\mu}{x^{2-\alpha}}u \Big) v\,dx\\
&= 0.
\end{align*}
Hence $\mathcal{A}^{\mu}_{\alpha}$ is dissipative.

Next, we are going to show that $I - \mathcal{A}^{\mu}_{\alpha}$ is surjective. Given a vector $g=(g_1,g_2)\in \mathcal{H}$, we seek a solution $(u,v)\in D(\mathcal{A}^{\mu}_{\alpha})$ of
\begin{equation*}
(I - \mathcal{A}^{\mu}_{\alpha})(u,v)^{T} = (g_1,g_2)^{T}.
\end{equation*}
This is equivalent to the following system:
\begin{align}
&u-A_{\alpha}^{\mu}u = g_1 + g_2, \label{equ1}
\\
&v = u - g_1. \label{equ2}
\end{align}
Suppose that we have found $u$ with the appropriate regularity, then we solve \eqref{equ2} and we obtain $v\in H_{\alpha,0}^{1,\mu}(0,1)$.

Now we state the process on how to get $u$. For this, we recall that $H^{1,\mu}_{\alpha,0}(0,1)$ is a Hilbert space for the scalar product $\langle \cdot,\cdot \rangle_{H^{1,\mu}_{\alpha}}$.
Consequently, for all $(g_1,g_2)^{T}\in \mathcal{H}$, there exists a unique $u \in H^{1,\mu}_{\alpha,0}(0,1)$
such that
$$
\forall \psi \in H^{1,\mu}_{\alpha,0}(0,1),\quad \langle u,\psi \rangle_{H^{1,\mu}_{\alpha}}= \int_{0}^{1} (g_1+g_2)\psi\,dx.
$$
Since $C^{\infty}_{c}(0,1)\subset H^{1,\mu}_{\alpha,0}(0,1)$, we have
$$
\int_{0}^{1} u\psi+   x^{\alpha} u_x\psi_x - \frac{\mu}{x^{2-\alpha}}u\psi\,dx= \int_{0}^{1} (g_1+g_2)\psi\,dx\qquad \forall \psi \in C^{\infty}_{c}(0,1).
$$
By duality, this implies that
$$
u - (x^{\alpha}u_x)_x - \frac{\mu}{x^{2-\alpha}}u =g_1+g_2,
$$
in the sense of distribution. Hence $u\in D(A_{\alpha}^{\mu})$ and 
$$
u- A_{\alpha}^{\mu} u =g_1+g_2 \quad \text{ a.e. in } (0,1),
$$
and the claim follows.

Applying the Hille-Yosida theorem (see \cite[Theorem 4.5.1]{barbu} or  \cite[Theorem A.7]{Coron2007}),
we conclude that if $F\in  C^1([0, T]; \mathcal{H})$, i.e., $h\in C^1([0, T]; L^2(0,1))$ and $(u_0,u_1)\in D(\mathcal{A}^{\mu}_{\alpha})$, i.e., $u_0\in H^{1,\mu}_{\alpha,0}(0,1), A_{\alpha}^{\mu} u_0 \in L^2(0,1)$, $u_1 \in H^{1,\mu}_{\alpha,0}(0,1)$, then system \eqref{problem1} (equivalently problem \eqref{problem}) has a
unique solution
\begin{align*}
&u \in  C^1([0,T]; H^{1,\mu}_{\alpha,0}(0,1)),\qquad \frac{du}{dt}\in  C^1([0,T]; L^2(0,1)),\\
&A_{\alpha}^{\mu} u \in C([0,T]; L^2(0,1)).
\end{align*}
If $u_0\in H^{1,\mu}_{\alpha,0}(0,1)$, $u_1\in L^2(0,1)$ and $h\in L^2(Q)$ then the Cauchy
problem (CP) has a mild solution
$$
(u,u_t)\in C([0,T]; H^{1,\mu}_{\alpha,0}(0,1))\times C([0, T]; L^2(0,1)).
$$
\end{proof}
\begin{remark}\label{remark2}
In order to prove our main controllability result we use the so-called multiplier method. The justification of the computations may sometimes be
delicate since we work in non standard weighted spaces, specially in the critical case.
For this reason, we make formal computations that may be justified by the regularization
process described in \cite[Remark 5]{Van2011}. More precisely, one can define a regularized operator 
$$A^{\mu,n}_{\alpha}u:= (x^\alpha u_x)_x + \dfrac{\mu}{(x+\frac{1}{n})^{2-\alpha}}u$$
whose domain is the same as in the purely degenerate case, that is to say $D_{\alpha}^{0}:=H^{2}_{\alpha}(0,1)$ (see \cite{CMV2008}).
Therefore the classical solutions $u^n$ of the regularized problem possess all the regularity required to justify the computations (see \cite{Alabau2016}).
Passing to the limit as $n \rightarrow +\infty$, we recover the result for the solutions $u$ of \eqref{problem}. In what follows, we directly write the computations formally for the solutions $u$ of \eqref{problem}.
\end{remark}
For all $\mu\leq \mu(\alpha)$, we define the generalized energy of a mild solution $u$ of \eqref{adjointproblem} by:
\begin{equation}
\label{energy}
\begin{aligned}
E_{u}(t)&=\frac{1}{2} \int_{0}^{1}\left\{u_{t}^{2}(t, x)+x^\alpha u_{x}^{2}(t, x)-\frac{\mu}{x^{2-\alpha}}u^2(t, x)\right\}\,dx\\
&=\frac{1}{2} \left[\|u_t(t)\|_{L^2(0,1)}^2 + \|u(t)\|_{H^{1,\mu}_{\alpha,0}}^2\right], \quad \forall t \geq 0.
\end{aligned}
\end{equation}
Classical computations show that the generalized energy $E_{u}$ of the solution is constant.
\begin{proposition}
Assume \eqref{basicass} holds and consider $(u_0,u_1)\in H^{1,\mu}_{\alpha,0}(0,1)\times L^2(0,1)$. Then the energy $t \mapsto E_{u}(t)$ of the mild solution $u$ of \eqref{adjointproblem} is constant in time, i.e.
\begin{equation}
\label{constenergy}
E_{u}(t)=E_{u}(0), \quad \forall t \geq 0.
\end{equation}
\end{proposition}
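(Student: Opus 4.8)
The plan is to establish the conservation law first for classical solutions, where the energy may be differentiated under the integral sign, and then to extend it to mild solutions by density. So I would begin by taking $(u_0,u_1)\in D^\mu_\alpha\times H^{1,\mu}_{\alpha,0}(0,1)$, so that Theorem \ref{wellposed}(ii) supplies a solution regular enough that $t\mapsto E_u(t)$ is differentiable. Following Remark \ref{remark2}, all the computations below are carried out formally for $u$ and made rigorous on the regularized problems $A^{\mu,n}_\alpha$ (whose domain is $D^0_\alpha=H^2_\alpha(0,1)$), with a final passage to the limit. Differentiating \eqref{energy} gives
$$E_u'(t)=\int_0^1\left\{u_t u_{tt}+x^\alpha u_x u_{xt}-\frac{\mu}{x^{2-\alpha}}\,u\,u_t\right\}dx.$$

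Next I would use that the adjoint system \eqref{adjointproblem} carries no source term, so that $u_{tt}=(x^\alpha u_x)_x+\frac{\mu}{x^{2-\alpha}}u$, substitute this into the first term, and integrate by parts in the second:
$$\int_0^1 x^\alpha u_x u_{xt}\,dx=\left[x^\alpha u_x u_t\right]_0^1-\int_0^1 (x^\alpha u_x)_x\,u_t\,dx.$$
The two volume integrals containing $(x^\alpha u_x)_x\,u_t$ cancel, and the two singular-potential terms cancel as well, leaving only the boundary contribution
$$E_u'(t)=\left[x^\alpha u_x u_t\right]_0^1=u_x(t,1)u_t(t,1)-\lim_{x\to 0^+}x^\alpha u_x(t,x)u_t(t,x).$$

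The crucial step is then to show that this boundary term vanishes for every $t$. At $x=1$ the condition $u(t,1)=0$ for all $t$ forces $u_t(t,1)=0$, so the upper endpoint contributes nothing. At $x=0$ the analysis splits according to $\alpha$: when $0\le\alpha<1$ the Dirichlet trace $u(t,0)=0$ yields $u_t(t,0)=0$, while $x^\alpha u_x$ admits a finite trace at the origin (rigorously on the regularized problems, where $x^\alpha u^n_x\in H^1(0,1)$), so the product tends to $0$; when $1<\alpha<2$ the weighted Neumann condition $(x^\alpha u_x)(t,0)=0$ annihilates the term directly. Hence $E_u'(t)=0$, which gives $E_u(t)=E_u(0)$ for classical solutions.

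Finally I would remove the extra regularity by density. Given $(u_0,u_1)\in H^{1,\mu}_{\alpha,0}(0,1)\times L^2(0,1)$, I would approximate it by regular data in the energy norm, invoke the continuous dependence estimate of Theorem \ref{wellposed}(i), and pass to the limit, using that $E_u(t)=\tfrac12\|(u(t),u_t(t))\|^2_{H^{1,\mu}_{\alpha,0}\times L^2}$ is exactly the squared energy norm and hence continuous in the state. I expect the genuine difficulty to lie not in the algebra but in the legitimacy of the boundary analysis at the degenerate/singular endpoint $x=0$, especially in the critical case $\mu=\mu(\alpha)$, where the enlarged space $H^{1,\mu(\alpha)}_{\alpha,0}(0,1)$ permits weaker decay of $u$ near $0$; it is precisely here that the regularization scheme of Remark \ref{remark2} is indispensable to justify both the integration by parts and the vanishing of $x^\alpha u_x u_t$ as $x\to 0^+$.
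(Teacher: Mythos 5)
Your proposal is correct and follows essentially the same route as the paper: both arguments amount to multiplying the equation by $u_t$ (equivalently, differentiating $E_u$ and substituting the PDE), integrating by parts to isolate the boundary term $\left[x^\alpha u_x u_t\right]_{x=0}^{x=1}$, showing it vanishes via the boundary conditions (with the regularization of Remark \ref{remark2} justifying the computation near $x=0$), and then extending to mild solutions by density. Your case-by-case discussion of the endpoint $x=0$ is in fact slightly more explicit than the paper's one-line dismissal of the boundary terms.
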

\begin{proof}
Suppose first that $u$ is a classical solution of \eqref{adjointproblem}. Then, multiplying the
equation by $u_t$ and integrating by parts, we obtain
\begin{equation*}
\begin{aligned}
0 &=\int_{0}^{1} u_{t}(t, x)\left\{u_{t t}(t, x)-\left(x^\alpha u_{x}(t, x)\right)_{x}-\frac{\mu}{x^{2-\alpha}}u(t,x)\right\}\,dx \\
&=\overbrace{\int_{0}^{1}\left\{u_{t}(t, x) u_{t t}(t, x)+x^\alpha u_{x}(t, x) u_{t x}(t, x)-\frac{\mu}{x^{2-\alpha}} u_{t}(t, x)u(t,x)\right\}\,dx}^{=\frac{d}{d t} E_{u}(t)}\\
&\quad-\left[x^\alpha u_{t}(t, x) u_{x}(t, x)\right]_{x=0}^{x=1}.
\end{aligned}
\end{equation*}
Using the boundary conditions, we see that the boundary terms vanish. We conclude that the energy of $u$ is constant.
The same conclusion can be extended to any mild solution by an approximation
argument.
\end{proof}
We give now some important lemmas that we shall need to handle boundary conditions in the proof of the multiplier identity \eqref{egaconti}.
\begin{lemma}\label{lemma0}
Let $0\leq \alpha <1$ be given. Then, for all $u \in H_{\alpha}^{2}(0,1)$,
\begin{equation}\label{bt1}
x^{\alpha-1} u^{2}(x) \rightarrow 0 \quad \text { as } x \rightarrow 0^{+}.
\end{equation}
\end{lemma}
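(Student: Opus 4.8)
The plan is to control $x^{\alpha-1}u^2(x)$ by a ``tail'' of the finite-energy quantity $\int_0^1 x^\alpha u_x^2\,dx$, which is finite since $x^{\alpha/2}u_x \in L^2(0,1)$, and then to let $x \to 0^+$. Because $0 \le \alpha < 1$, the trace at $x=0$ of any $u$ in $H^2_\alpha(0,1) \subset H^1_{\alpha,0}(0,1)$ is well defined and vanishes, so I would first represent $u$ near the origin as the integral of its derivative, $u(x) = \int_0^x u_x(s)\,ds$.

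Before using this representation I must check that $u_x$ is integrable on $(0,x)$. Splitting $u_x(s) = s^{-\alpha/2}\,\bigl(s^{\alpha/2}u_x(s)\bigr)$ and applying Cauchy--Schwarz gives
\[
\int_0^x |u_x(s)|\,ds \le \Big(\int_0^x s^{-\alpha}\,ds\Big)^{1/2}\Big(\int_0^x s^\alpha u_x^2(s)\,ds\Big)^{1/2},
\]
and the first factor is finite \emph{precisely} because $\alpha < 1$, using $\int_0^x s^{-\alpha}\,ds = x^{1-\alpha}/(1-\alpha)$. This both legitimises the fundamental-theorem-of-calculus representation above (together with $u(0)=0$) and, applied now to $u^2(x)$ directly, yields the key estimate
\[
u^2(x) \le \Big(\int_0^x s^{-\alpha}\,ds\Big)\Big(\int_0^x s^\alpha u_x^2(s)\,ds\Big) = \frac{x^{1-\alpha}}{1-\alpha}\int_0^x s^\alpha u_x^2(s)\,ds.
\]
Multiplying through by $x^{\alpha-1}$ gives the clean bound $x^{\alpha-1}u^2(x) \le (1-\alpha)^{-1}\int_0^x s^\alpha u_x^2(s)\,ds$.

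To conclude, I would invoke the absolute continuity of the Lebesgue integral: since $s \mapsto s^\alpha u_x^2(s)$ belongs to $L^1(0,1)$, the tail $\int_0^x s^\alpha u_x^2(s)\,ds$ tends to $0$ as $x \to 0^+$. Combined with the displayed bound, this forces $x^{\alpha-1}u^2(x) \to 0$, which is exactly \eqref{bt1}.

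The only delicate point, and the main obstacle, is the rigorous justification of $u(x) = \int_0^x u_x(s)\,ds$: one needs both that $u_x \in L^1$ near the origin (supplied by the Cauchy--Schwarz bound above, which is where the restriction $\alpha<1$ is essential) and that the $x=0$ trace of $u$ genuinely coincides with the limit of $u(x)$ and equals zero, which is the content of the definition of $H^1_{\alpha,0}(0,1)$ for $0 \le \alpha < 1$. Everything else reduces to a single application of Cauchy--Schwarz followed by absolute continuity, and notably the argument uses only $u \in H^1_{\alpha,0}(0,1)$ rather than the full regularity $u \in H^2_\alpha(0,1)$.
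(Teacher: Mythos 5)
Your proof is correct, and it takes a genuinely different (in fact more general) route than the paper's. The paper exploits the full $H_{\alpha}^{2}$ regularity: since $x^{\alpha}u_{x}\in H^{1}(0,1)\subset L^{\infty}(0,1)$, one gets the pointwise bound $|u_{x}(x)|\leq c\,x^{-\alpha}$, hence $|u(x)|=\left|\int_{0}^{x}u_{x}(\sigma)\,d\sigma\right|\leq c\,x^{1-\alpha}$ and $x^{\alpha-1}u^{2}(x)\leq c\,x^{1-\alpha}\to 0$, which even gives an explicit decay rate $O(x^{1-\alpha})$. You instead use only the finite-energy information $x^{\alpha/2}u_{x}\in L^{2}(0,1)$ together with the vanishing trace $u(0)=0$: Cauchy--Schwarz gives $x^{\alpha-1}u^{2}(x)\leq (1-\alpha)^{-1}\int_{0}^{x}s^{\alpha}u_{x}^{2}(s)\,ds$, and absolute continuity of the Lebesgue integral concludes, without any rate. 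What your argument buys is generality: it proves the lemma for every $u\in H^{1}_{\alpha,0}(0,1)$, not just $u\in H_{\alpha}^{2}(0,1)$, which makes the weakly degenerate case exactly parallel to Lemma \ref{lemma1} (stated for $H^{1}_{\alpha,0}$ in the strongly degenerate range) and would in principle lighten the appeal to $H^{2}_{\alpha}$-valued regularized solutions when establishing \eqref{keystima} for $0\leq\alpha<1$; what the paper's argument buys is brevity and a quantitative bound, at the price of the stronger hypothesis. One point to keep explicit in your write-up, which you correctly flagged as the delicate step: identifying the trace $u(0)$ from the definition of $H^{1}_{\alpha,0}(0,1)$ with $\lim_{x\to 0^{+}}u(x)$ is legitimate precisely because your Cauchy--Schwarz estimate shows $u_{x}\in L^{1}(0,1)$, so $u\in W^{1,1}(0,1)$ is absolutely continuous up to $x=0$ and the fundamental theorem of calculus representation $u(x)=\int_{0}^{x}u_{x}(s)\,ds$ holds.
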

\begin{proof}
By the definition of $H_{\alpha}^{2}(0,1)$ in the case $0\leq \alpha <1$, we know that $x^{\alpha} u_{x} \in H^1(0,1)\subset L^{\infty}(0,1)$ and $u(0)=0$. Then,
$$
\left|u_{x}(x)\right| \leq c x^{-\alpha} \text { and } |u(x)|=\left|\int_{0}^{x} u_{x}(\sigma)\,d \sigma\right| \leq c x^{1-\alpha}.
$$
This implies $\left|x^{\alpha-1} u^2(x)\right| \leq c x^{1-\alpha}$. Thus, $\alpha<1$ yields the claim.
\end{proof}

\begin{lemma}\label{lemma1}
Let $1<\alpha <2$ be given. Then, for all $u \in H_{\alpha,0}^{1}(0,1)$,
\begin{equation}\label{bt1}
x^{\alpha-1} u^{2}(x) \rightarrow 0 \quad \text { as } x \rightarrow 0^{+}.
\end{equation}
\end{lemma}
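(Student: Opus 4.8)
The plan is to prove the convergence in two stages: first establishing that $v(x):=x^{\alpha-1}u^2(x)$ possesses a finite limit as $x\to 0^+$, and then showing that this limit must vanish. The essential inputs are two integral bounds that hold for every $u\in H^1_{\alpha,0}(0,1)$ with $1<\alpha<2$: the weighted gradient bound $\int_0^1 x^\alpha u_x^2\,dx<+\infty$, which is built into the definition of the space, and the Hardy bound $\int_0^1 x^{\alpha-2}u^2\,dx<+\infty$. The latter is a consequence of the generalized Hardy inequality \eqref{dshp}; it is legitimate here because $H^1_{\alpha,0}(0,1)$ is the closure of $C^\infty_c(0,1)$ for $\|\cdot\|_{H^1_\alpha(0,1)}$, so \eqref{dshp} extends to $u$ by density, and because $\alpha\neq 1$ keeps the constant $(1-\alpha)^2/4$ strictly positive.

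For the first stage, I would note that since $u\in H^1_{loc}((0,1])$, the function $v$ is absolutely continuous on each interval $[\delta,1]$ with $\delta>0$, and that
$$v'(x)=(\alpha-1)\frac{u^2}{x^{2-\alpha}}+2\,\frac{u}{x^{(2-\alpha)/2}}\,x^{\alpha/2}u_x.$$
The crucial observation is that $v'\in L^1(0,1)$: the first term is integrable by the Hardy bound, while the second is integrable by a single Cauchy--Schwarz estimate,
$$\int_0^1\left|\frac{u}{x^{(2-\alpha)/2}}\,x^{\alpha/2}u_x\right|\,dx\le\left(\int_0^1\frac{u^2}{x^{2-\alpha}}\,dx\right)^{1/2}\left(\int_0^1 x^\alpha u_x^2\,dx\right)^{1/2}<+\infty.$$
From $v(x_2)-v(x_1)=\int_{x_1}^{x_2}v'(s)\,ds$ together with the Cauchy criterion, $v'\in L^1(0,1)$ forces $v(x)$ to converge to some finite limit $L\ge 0$ as $x\to 0^+$.

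The second stage rules out $L>0$ by contradiction. If $L>0$, then $x^{\alpha-1}u^2(x)\ge L/2$ for all sufficiently small $x$, hence $\frac{u^2}{x^{2-\alpha}}\ge\frac{L}{2x}$ near the origin; integrating over $(0,\delta)$ and using $\int_0^\delta \frac{dx}{x}=+\infty$ contradicts the finiteness of the Hardy bound. Therefore $L=0$, which is exactly \eqref{bt1}.

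The single idea carrying the argument is the recognition that the derivative of the target quantity $x^{\alpha-1}u^2$ splits precisely into two pieces, each controlled by one of the two finite energies, so that $v'$ lands in $L^1(0,1)$; everything else is soft (an $L^1$ derivative yields a limit, and $1/x$ is non-integrable at $0$). I expect this to be the main obstacle, and it is also where the argument genuinely departs from the proof of Lemma \ref{lemma0}: there one had the extra $H^2_\alpha$ regularity and the boundary value $u(0)=0$, allowing direct pointwise bounds, whereas here neither is available for $\alpha>1$, so the pointwise estimates are replaced by the two integral bounds.
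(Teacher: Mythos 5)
Your proof is correct and follows essentially the same route as the paper's: you compute the derivative of $x^{\alpha-1}u^2$, split it into the Hardy term $(\alpha-1)x^{\alpha-2}u^2$ and the cross term $2(x^{(\alpha-2)/2}u)(x^{\alpha/2}u_x)$, control both in $L^1(0,1)$ via the extended inequality \eqref{dshp} and Cauchy--Schwarz to conclude existence of a limit $L\ge 0$, and then rule out $L>0$ by the non-integrability of $1/x$ against the finite Hardy integral. The only cosmetic difference is that the paper phrases the first stage as membership of $x^{\alpha-1}u^2$ in $W^{1,1}(0,1)$, while you use the Cauchy criterion with $v'\in L^1$; these are the same argument.
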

\begin{proof}
Let $u$ be given in $H_{\alpha,0}^{1}(0,1)$. By the definition of $H_{\alpha,0}^{1}(0,1)$, we know that $u \in L^{2}(0,1)$ and $x^{\alpha / 2} u_{x} \in L^{2}(0,1)$. Then
$x^{\alpha-1} u^{2} \in L^{1}(0,1)$. Moreover, we have:
$$
\left(x^{\alpha-1} u^{2}\right)_{x}=(\alpha-1)x^{\alpha-2}u^{2}+2 x^{\alpha-1} u u_{x}.
$$
By \eqref{dshp}, it is easy to see that  $x^{\alpha-2}u^{2} \in L^{1}(0,1)$  and  $x^{\alpha-1} u u_{x}=\left(x^{\alpha / 2 -1} u\right)\left(x^{\alpha / 2} u_{x}\right) \in L^{1}(0,1)$. Hence, $x^{\alpha-1} u^{2} \in W^{1,1}(0,1)$. Thus, $x^{\alpha-1} u^{2} \rightarrow L \geq 0$ as $x \rightarrow 0^{+}$. Finally, $L=0$ since $L \neq 0$ would imply $x^{\alpha / 2 -1} u \notin L^{2}(0,1)$. This completes the proof.
\end{proof}

\subsection{Hidden regularity result}

In this subsection, we prove a regularity result for \eqref{adjointproblem} which is often called hidden
regularity result.

\begin{lemma}\label{hiddenregularity}
Assume that \eqref{basicass} holds. For any mild solution $u$ of \eqref{adjointproblem} we have that $u_{x}(., 1) \in L^{2}(0, T)$ for
every $T> 0$ and
\begin{equation}\label{directineq}
\begin{aligned}
&\int_{0}^{T} u_{x}^{2}(t, 1)\,dt \leq\left(2T + 4\right) E_{u}(0),\qquad \text{if}\quad 0\leq \alpha <1 \quad \text{and}\\
&\int_{0}^{T} u_{x}^{2}(t, 1)\,dt \leq\left(2T + 4C^{\prime}_{\alpha}\right) E_{u}(0),\qquad \text{if}\quad 1\leq \alpha<2,
\end{aligned}
\end{equation}
where $C^{\prime}_{\alpha}$ is the constant appearing in \eqref{keyineq2}.

Moreover,
\begin{equation}
\label{egaconti}
\begin{aligned}
\int_{0}^{T} u_{x}^{2}(t, 1)\,dt &=\int_{0}^{T}\!\!\!\int_{0}^{1}\left\{u_{t}^{2}(t, x)+\left(1-\alpha\right)\left(x^\alpha u_{x}^{2}(t, x)-\frac{\mu}{x^{2-\alpha}}u^2(t,x)\right)\right\}\,dx\,dt \\
&+2\left[\int_{0}^{1} x u_{x}(t, x) u_{t}(t, x)\,dx\right]_{t=0}^{t=T}.
\end{aligned}
\end{equation}
\end{lemma}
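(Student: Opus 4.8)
The plan is to prove both statements by the classical multiplier method with the multiplier $x u_x$. Concretely, I would multiply the equation $u_{tt}-(x^\alpha u_x)_x-\frac{\mu}{x^{2-\alpha}}u=0$ by $x u_x$ and integrate over $Q=(0,T)\times(0,1)$. Following Remark \ref{remark2}, I would carry out all the integrations by parts on the regularized classical solutions $u^n$ (for which $x^\alpha u^n_x\in H^1(0,1)$ and every term below is meaningful) and recover the identity for mild solutions by letting $n\to+\infty$.

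The computation splits into three pieces. Integrating $\int_Q x u_x u_{tt}$ once by parts in $t$ and once in $x$ produces the temporal boundary contribution $\big[\int_0^1 x u_x u_t\,dx\big]_{t=0}^{t=T}$ together with $\frac12\int_Q u_t^2$, after noting that $u(t,1)=0$ forces $u_t(t,1)=0$ and hence kills the spatial endpoint at $x=1$. Integrating $-\int_Q x u_x (x^\alpha u_x)_x$ twice by parts in $x$ yields the boundary trace $\frac12\int_0^T u_x^2(t,1)\,dt$ and the interior term $\frac{1-\alpha}{2}\int_Q x^\alpha u_x^2$. Finally, writing the singular term as $-\mu\int_Q x^{\alpha-1}uu_x=-\frac\mu2\int_Q x^{\alpha-1}(u^2)_x$ and integrating by parts gives $-\frac{1-\alpha}{2}\mu\int_Q \frac{u^2}{x^{2-\alpha}}$. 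Adding the three contributions (which sum to zero) and solving for $\int_0^T u_x^2(t,1)\,dt$ produces exactly \eqref{egaconti}.

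The main obstacle is the vanishing of the boundary terms at the degenerate/singular endpoint $x=0$, where three limits must be controlled as $x\to0^+$: $x^{\alpha-1}u^2$, $x^{\alpha+1}u_x^2$, and $x u_t^2$. The first is precisely the content of Lemma \ref{lemma0} (for $0\le\alpha<1$) and Lemma \ref{lemma1} (for $1<\alpha<2$), so the singular boundary term disappears. For the second, $x^{\alpha+1}u_x^2=x^{1-\alpha}(x^\alpha u_x)^2$; on the regularized solutions $x^\alpha u^n_x\in H^1(0,1)\hookrightarrow L^\infty(0,1)$, so when $\alpha<1$ the factor $x^{1-\alpha}\to0$ settles it, while for $1<\alpha<2$ the boundary condition $(x^\alpha u_x)(t,0)=0$ gives $(x^\alpha u^n_x)^2\le Cx$ and hence $x^{\alpha+1}(u^n_x)^2\le Cx^{2-\alpha}\to0$. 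For the third, $x u_t^2=x^{2-\alpha}\big(x^{\alpha-1}u_t^2\big)$ vanishes because $u_t(t)\in H^{1,\mu}_{\alpha,0}(0,1)$ enjoys the same Hardy decay that drives the proof of Lemma \ref{lemma1}. This is the step that genuinely exploits the weighted-space structure and the regularization of Remark \ref{remark2}.

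It remains to deduce \eqref{directineq} from \eqref{egaconti}. Setting $I_1=\int_Q u_t^2$ and $I_2=\int_Q\big(x^\alpha u_x^2-\frac{\mu}{x^{2-\alpha}}u^2\big)$, the constancy of the energy \eqref{constenergy} gives $I_1+I_2=2T\,E_u(0)$ with $I_1,I_2\ge0$; hence $I_1+(1-\alpha)I_2\le 2T\,E_u(0)$ in both regimes (for $0\le\alpha<1$ since $\alpha\ge0$ and $I_2\ge0$, and for $1\le\alpha<2$ since then $(1-\alpha)I_2\le0$ while $I_1\le 2T\,E_u(0)$). For the temporal boundary term, Young's inequality and Cauchy--Schwarz give $\big|\int_0^1 x u_x u_t\,dx\big|\le\frac12\int_0^1 x^2u_x^2\,dx+\frac12\int_0^1 u_t^2\,dx$; bounding $\int_0^1 x^2 u_x^2\,dx$ via Lemma \ref{keytool1} (with constant $1$ through \eqref{keyineq1} when $\alpha<1$, and with $C'_\alpha\ge1$ through \eqref{keyineq2} when $\alpha\ge1$) and invoking $\mu\le\mu(\alpha)$ together with \eqref{constenergy} yields $\big|\int_0^1 x u_x u_t\,dx\big|\le E_u(0)$, respectively $\le C'_\alpha E_u(0)$, at both $t=0$ and $t=T$. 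Consequently $\big|2[\int_0^1 x u_x u_t\,dx]_0^T\big|\le 4E_u(0)$ (resp.\ $4C'_\alpha E_u(0)$), and combining this with the bound on the interior terms gives \eqref{directineq}. Since the right-hand side depends only on $E_u(0)$, the estimate extends from regularized/classical to general mild solutions by density, which at the same time establishes the hidden regularity $u_x(\cdot,1)\in L^2(0,T)$.
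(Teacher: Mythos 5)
Your proposal is correct and takes essentially the same route as the paper's proof: the multiplier $x u_x$ applied to the regularized solutions of Remark \ref{remark2}, Lemmas \ref{lemma0} and \ref{lemma1} for the vanishing of $x^{\alpha-1}u^2$ at $x=0$, the inequalities \eqref{keyineq1}--\eqref{keyineq2} of Lemma \ref{keytool1} to bound the temporal boundary terms by $E_u(0)$ (resp.\ $C'_\alpha E_u(0)$), energy conservation for the interior terms, and a density argument to pass to mild solutions. The only difference is one of detail: you verify explicitly the decay of $x^{\alpha+1}u_x^2$ and $x u_t^2$ at the degenerate endpoint, whereas the paper handles those boundary terms by citing \cite[Lemma 3.2]{Alabau2016}.
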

\begin{proof} The proof relies on the multiplier method combined with the new Hardy-type inequality \eqref{keyineq}.
Suppose first that $\left(u_{0}, u_{1}\right) \in D^{\mu}_{\alpha} \times H_{\alpha,0}^{1,\mu}(0,1)$, so that $u$ is a classical
solution of \eqref{adjointproblem}. Then, by multiplying \eqref{adjointproblem} by $x u_{x}$ and integrating over $(0, T)\times(0,1)$,
we obtain
\begin{equation}
\label{prbxux}
\begin{aligned}
0 &=\int_{0}^{T}\!\!\!\int_{0}^{1} x u_{x}(t, x)\left(u_{t t}(t, x)-\left(x^\alpha u_{x}(t, x)\right)_{x}-\frac{\mu}{x^{2-\alpha}}u(t,x)\right)\,dx\,dt \\
&=\left[\int_{0}^{1} x u_{x}(t, x) u_{t}(t, x)\,dx\right]_{t=0}^{t=T}-\int_{0}^{T}\!\!\!\int_{0}^{1} x u_{t x}(t, x) u_{t}(t, x)\,dx\,dt \\
&\quad -\int_{0}^{T}\!\!\!\int_{0}^{1}\left(\alpha x^\alpha u_{x}^{2}(t, x)+x^{\alpha+1}  u_{x}(t, x) u_{x x}(t, x)\right)\,dx\,dt-\mu\int_{0}^{T}\!\!\!\int_{0}^{1} x^{\alpha-1}u_x(t,x)u(t,x)\,dx\,dt \\
&=\left[\int_{0}^{1} x u_{x}(t, x) u_{t}(t, x)\,dx\right]_{t=0}^{t=T}-\int_{0}^{T}\!\!\!\int_{0}^{1}\alpha x^\alpha u_{x}^{2}(t, x)\,dx\,dt \\
&\quad-\int_{0}^{T}\!\!\!\int_{0}^{1}\left( x\left(\frac{u_{t}^{2}(t, x)}{2}\right)_{x}+x^{\alpha+1}\left(\frac{u_{x}^{2}(t, x)}{2}\right)_{x}+\mu x^{\alpha-1}\left(\frac{u^2(t,x)}{2}\right)_{x}\right)\,dx\,dt.
\end{aligned}
\end{equation}
Arguing as in the proof of \cite[Lemma 3.2]{Alabau2016}, integrations by parts lead to
\begin{equation}\label{therm1}
\begin{aligned}
&\int_{0}^{T}\!\!\!\int_{0}^{1} x\left(\frac{u_{t}^{2}(t, x)}{2}\right)_{x}\,dx\,dt=-\frac{1}{2} \int_{0}^{T}\!\!\!\int_{0}^{1}u_{t}^{2}(t, x)\,dx\,dt \text{ and}\\
&\int_{0}^{T}\!\!\!\int_{0}^{1} x^{\alpha+1}\left(\frac{u_{x}^{2}(t, x)}{2}\right)_{x}\,dx\,dt= \frac{1}{2} \int_{0}^{T} u_{x}^{2}(t, 1)\,dt-\frac{(\alpha+1)}{2} \int_{0}^{T}\!\!\!\int_{0}^{1} x^\alpha u_{x}^{2}(t, x)\,dx\,dt.
\end{aligned}
\end{equation}
We proceed to integrate by parts the last term in the right hand side of \eqref{prbxux}. We obtain
\begin{equation*}
\int_{0}^{T}\!\!\!\int_{0}^{1}x^{\alpha-1}\left(\frac{u^2(t,x)}{2}\right)_{x}\,dx\,dt
=\frac{1}{2} \int_{0}^{T}\left[x^{\alpha-1} u^{2}(t, x)\right]_{x=0}^{x=1}\,dt -\frac{(\alpha-1)}{2} \int_{0}^{T}\!\!\!\int_{0}^{1} \frac{u^{2}(t, x)}{x^{2-\alpha}}\,dx\,dt.
\end{equation*}
Clearly $\left.x^{\alpha-1} u^2 \right|_{x=1}=0$. Now, we show also that
\begin{equation}\label{keystima}
\left.x^{\alpha-1} u^2 \right|_{x=0}=0.
\end{equation}
To this end, we recall that according to Remark \ref{remark2}, we consider regular solutions of the regularized problem. Thus $u$
takes its values in $D^{0}_{\alpha}=H^{2}_{\alpha}(0,1)$. Hence, in the case $0\leq \alpha<1$, the claim follows by Lemma \ref{lemma0}.
Moreover, applying Lemma \ref{lemma1}, the same conclusion still holds true if $1<\alpha<2$.

It follows that
\begin{equation}\label{therm3}
\int_{0}^{T}\!\!\!\int_{0}^{1}x^{\alpha-1}\left(\frac{u^2}{2}\right)_{x}\,dx\,dt
=-\frac{(\alpha-1)}{2} \int_{0}^{T}\!\!\!\int_{0}^{1} \frac{u^{2}(t, x)}{x^{2-\alpha}}\,dx\,dt.
\end{equation}
Then the identity \eqref{egaconti} follows by inserting \eqref{therm1} and \eqref{therm3} into \eqref{prbxux}.

Next, we proceed to prove \eqref{directineq}. At this step, we need to distinguish two cases.

\textbf{If $\alpha \in[0,1)$.} For any $\mu\leq \mu(\alpha)$, we have 
\begin{equation}\label{stimaa}
\begin{aligned}
\left|\int_{0}^{1} x u_{x}(t, x) u_{t}(t, x)\,dx\right| & \leq \frac{1}{2} \int_{0}^{1}\left\{u_{t}^{2}(t, x)+x^{2} u_{x}^{2}(t, x)\right\}\,dx \\
&(\text{by \eqref{keyineq1}})\\
&\leq \frac{1}{2} \int_{0}^{1}\left\{u_{t}^{2}(t, x)+x^{\alpha} u_{x}^{2}(t, x) - \frac{\mu}{x^{2-\alpha}}u^2(t,x)\right\}\,dx\\
&= E_{u}(0) \qquad \forall t \geq 0.
\end{aligned}
\end{equation}

\textbf{If $\alpha \in(1,2)$.} Similarly, by Young inequality and using  \eqref{keyineq2},  we also have for any $\mu\leq \mu(\alpha)$
\begin{equation}\label{stimaastar}
\begin{aligned}
\left|\int_{0}^{1} x u_{x}(t, x) u_{t}(t, x)\,dx\right|
&\leq \frac{C^{\prime}_{\alpha}}{2} \int_{0}^{1}\left\{u_{t}^{2}(t, x)+x^{\alpha} u_{x}^{2}(t, x) - \frac{\mu}{x^{2-\alpha}}u^2(t,x)\right\}\,dx\\
&=C^{\prime}_{\alpha} E_{u}(0) \qquad \forall t \geq 0.
\end{aligned}
\end{equation}
Now, we deduce \eqref{directineq} from \eqref{egaconti}, \eqref{stimaa}, \eqref{stimaastar} and the constancy of the energy. The conclusion has thus been proved for classical solutions.

In order to extend \eqref{directineq} and \eqref{egaconti} to the mild solution associated with the initial data
$\left(u_{0}, u_{1}\right) \in H_{\alpha,0}^{1,\mu}(0,1) \times L^{2}(0,1)$,
it suffices to approximate such data by
$\left(u_{0}^{n}, u_{1}^{n}\right) \in D_{\alpha}^{\mu}\times H_{\alpha,0}^{1,\mu}(0,1)$
and use \eqref{directineq} to show that the normal derivatives
of the corresponding classical solutions give a Cauchy sequence in  $L^{2}(0,T)$.
\end{proof}

\subsection{Solutions defined by transposition}
Since we are dealing with boundary controls, we need to define the solution of \eqref{boundarycontrolproblem} by the transposition
method in the spirit of \cite{Lions1988}.

\begin{definition}
Let $f \in L^{2}(0,T)$ and let $\left(y_{0}, y_{1}\right) \in L^{2}(0,1) \times H_{\alpha,0}^{-1,\mu}(0,1)$ be
fixed arbitrarily. We say that $y$ is a solution by transposition of \eqref{boundarycontrolproblem} if
$$
y \in C^{1}([0,T]; H_{\alpha,0}^{-1,\mu}(0,1)) \cap C([0,T]; L^{2}(0,1))
$$
satisfies, for all $T>0$ and all, $\left(w_{T}^{0}, w_{T}^{1}\right) \in H_{\alpha,0}^{1,\mu}(0,1) \times L^{2}(0,1)$,
\begin{equation}
\label{identityoftransposition}
\begin{aligned}
\left\langle y^{\prime}(T), w_{T}^{0}\right\rangle_{H_{\alpha,0}^{-1,\mu}(0,1), H_{\alpha,0}^{1,\mu}(0,1)}&-\int_{0}^{1} y (T) w_{T}^{1}\,dx=
\left\langle y_{1}, w(0)\right\rangle_{H_{\alpha,0}^{-1,\mu}(0,1), H_{\alpha,0}^{1,\mu}(0,1)} \\
&-\int_{0}^{1} y_{0} w^{\prime}(0)\,dx-\int_{0}^{T} f(t) w_{x}(t, 1)\,dt.
\end{aligned}
\end{equation}
where $w$ is the solution of the backward equation
\begin{equation}\label{backwardsystem}
\begin{cases}
w_{tt} - (x^\alpha w_x)_x  - \frac{\mu}{x^{2-\alpha}}w =0, & (t,x) \in Q,\\
w(t,1)=0,  & t \in (0,T), \\
\left\{
\begin{array}{ll}
w(t, 0) = 0,\quad & \text{if} \;0 \leq \alpha <1,\\
(x^{\alpha} w_{x})(t, 0)= 0, \quad & \text{if} \; 1 < \alpha <2,
\end{array}
\right.
& t \in (0,T),
\\
w(T,x)=w_T^0(x),\quad w_t(T,x)=w_T^1(x), & x \in (0,1).
\end{cases}
\end{equation}
\end{definition}
Note that equation \eqref{backwardsystem} can be reduced to \eqref{adjointproblem} by changing $t$ in $T-t$. In fact, thanks to the change of variable $u(t, x)=w(T-t, x)$
and according to Theorem \ref{wellposed}, the backward system \eqref{backwardsystem} admits a unique solution
$w \in C^{1}([0,T]; L^{2}(0,1)) \cap C([0,T]; H_{\alpha,0}^{1,\mu}(0,1))$ for each $T > 0$.
Moreover, this solution depends continuously on $\left(w_{T}^{0}, w_{T}^{1}\right) \in H_{\alpha,0}^{1,\mu}(0,1) \times L^{2}(0,1)$
and the energy  $E_{w}$ of $w$ is conserved through time.

Now let us define $\mathcal{L}$ by
$$
\mathcal{L}\left(w_{T}^{0}, w_{T}^{1}\right)=\left\langle y_{1}, w(0)\right\rangle_{H_{\alpha,0}^{-1,\mu}(0,1), H_{\alpha,0}^{1,\mu}(0,1)}
-\int_{0}^{1} y_{0} w^{\prime}(0)\,dx-\int_{0}^{T} f(t) w_{x}(t, 1)\,dt.
$$
In view of the direct inequality \eqref{directineq}, there exists a constant $D > 0$ such that
$$
\int_{0}^{T} w_{x}^{2}(t, 1)\,dt \leq D E_{w}(0)=D E_{w}(T),
$$
where 
$$
E_{w}(T)=\frac{1}{2}\left[\|w_T^1\|_{L^2}^2 +\|w_T^0\|_{H^{1,\mu}_{\alpha,0}}^2 \right].
$$
Thus, the mapping $\mathcal{L}$ is a continuous linear form with respect to $\left(w_{T}^{0}, w_{T}^{1}\right) \in$ $H_{\alpha,0}^{1,\mu}(0,1) \times L^{2}( 0,1)$.
Therefore, from Riesz Theorem, there exist a unique couple $\left(y^T_{0}, y^T_{1}\right) \in L^{2}(0,1) \times H^{-1,\mu}_{\alpha,0}(0,1)$ such that
$$
\forall\left(w_{T}^{0}, w_{T}^{1}\right),\quad \mathcal{L}\left(w_{T}^{0}, w_{T}^{1}\right)=\left\langle y^T_{1}, w_{T}^{0}\right\rangle_{H_{\alpha,0}^{-1,\mu}(0,1), H_{\alpha,0}^{1,\mu}(0,1)}-\int_{0}^{1} y^T_{0} w_{T}^{1}\,dx.
$$
Hence, there is a unique solution by transposition
$y \in C^{1}([0,T]; H_{\alpha,0}^{-1,\mu}( 0,1)) \cap C([0,T]; L^{2}( 0,1))$
of \eqref{boundarycontrolproblem} (we refer to \cite[Theorem 4.2, page 46–54]{Lions1988} for the details).

\section{Boundary observability}\label{section3}
In this section the problem of boundary observability of the degenerate/singular wave equation \eqref{adjointproblem} is studied.
Our result guarantees the observability of system \eqref{adjointproblem} under the condition \eqref{basicass}. In order to prove such a result, we first prove the following identity.
\begin{lemma}
For any mild solution $u$ of \eqref{adjointproblem} we have that, for each $T> 0$,
\begin{equation}
\label{lemega}
\int_{0}^{T}\!\!\!\int_{0}^{1}\left\{x^\alpha u_{x}^{2}(t, x)-\frac{\mu}{x^{2-\alpha}}u^2(t,x)-u_{t}^{2}(t, x)\right\}\,dt\,dx+\left[\int_{0}^{1} u(t, x) u_{t}(t, x)\,dx\right]_{t=0}^{t=T}=0.
\end{equation}
\end{lemma}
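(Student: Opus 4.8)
The plan is to prove \eqref{lemega} by the multiplier method, this time using $u$ itself as the multiplier. Following Remark \ref{remark2}, I would first take $(u_0,u_1)\in D^\mu_\alpha\times H^{1,\mu}_{\alpha,0}(0,1)$, so that $u$ is a classical solution arising as a limit of the regularized solutions valued in $H^2_\alpha(0,1)$, carry out all computations at that level of regularity, and recover the general case by approximation at the very end. Multiplying the equation in \eqref{adjointproblem} by $u$ and integrating over $Q$ gives
\begin{equation*}
0=\int_0^T\!\!\!\int_0^1 u\Big(u_{tt}-(x^\alpha u_x)_x-\tfrac{\mu}{x^{2-\alpha}}u\Big)\,dx\,dt,
\end{equation*}
and I would handle the three resulting terms separately.

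For the first term, an integration by parts in time gives $\int_0^T u\,u_{tt}\,dt=\big[u u_t\big]_{t=0}^{t=T}-\int_0^T u_t^2\,dt$, which after integration in $x$ contributes the time-boundary bracket $\big[\int_0^1 u u_t\,dx\big]_{t=0}^{t=T}$ and the volume term $-\int\!\!\int u_t^2$. For the second term, an integration by parts in space yields $-\int_0^1 u(x^\alpha u_x)_x\,dx=\int_0^1 x^\alpha u_x^2\,dx-\big[x^\alpha u u_x\big]_{x=0}^{x=1}$. The third term is $-\mu\int\!\!\int \tfrac{u^2}{x^{2-\alpha}}$, which is finite because $u(t)\in H^{1,\mu}_{\alpha,0}(0,1)$ and \eqref{dshp} controls the singular weight. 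Collecting the three contributions, the volume terms together with the time-boundary bracket reproduce exactly the left-hand side of \eqref{lemega}, so the identity reduces to showing that the spatial boundary term $\int_0^T\big[x^\alpha u u_x\big]_{x=0}^{x=1}\,dt$ vanishes.

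The main obstacle is precisely this boundary term at $x=0$; the contribution at $x=1$ is immediate since $u(t,1)=0$. This is where the degenerate/singular structure enters, and I expect the case $1<\alpha<2$ to be delicate because there $u$ need not stay bounded near the origin. For $0\le\alpha<1$, working with the regularized solution in $H^2_\alpha(0,1)$ one has $u(0)=0$ and $x^\alpha u_x\in H^1(0,1)\hookrightarrow L^\infty(0,1)$, so $x^\alpha u u_x\to 0$ exactly by the estimates of Lemma \ref{lemma0}. For $1<\alpha<2$, I would combine the boundary condition $(x^\alpha u_x)(t,0)=0$ with the decay $x^{\alpha-1}u^2\to 0$ furnished by Lemma \ref{lemma1}: writing $x^\alpha u u_x=(x^{(\alpha-1)/2}u)\,(x^{(\alpha+1)/2}u_x)$, the first factor tends to $0$ by Lemma \ref{lemma1}, while $x^{(\alpha+1)/2}u_x=x^{(1-\alpha)/2}(x^\alpha u_x)$ also tends to $0$ since $x^\alpha u_x\in H^1(0,1)\hookrightarrow C^{0,1/2}([0,1])$ vanishes at the origin and is therefore $O(x^{1/2})$. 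Hence the boundary term vanishes in both regimes.

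Once this boundary term is discarded, \eqref{lemega} holds for classical solutions. To finish, I would extend it to an arbitrary mild solution with data $(u_0,u_1)\in H^{1,\mu}_{\alpha,0}(0,1)\times L^2(0,1)$ by approximating the data in the $D^\mu_\alpha\times H^{1,\mu}_{\alpha,0}$ norm, passing to the limit in each term via the continuous dependence of Theorem \ref{wellposed} and the trace bound \eqref{directineq}, exactly as in the concluding density argument of Lemma \ref{hiddenregularity}.
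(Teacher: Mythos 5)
Your proposal is correct and follows essentially the same route as the paper: multiply the equation by $u$, integrate by parts in time and space, reduce everything to the vanishing of the spatial boundary term $\left[x^\alpha u u_x\right]_{x=0}^{x=1}$, and extend to mild solutions by density. The only difference is that where the paper disposes of the term at $x=0$ by citing \cite[Proposition 2.5]{Alabau2016} together with Remark \ref{remark2}, you give a self-contained justification via Lemmas \ref{lemma0} and \ref{lemma1} and the H\"older continuity of $x^\alpha u_x$ (which vanishes at the origin in the strongly degenerate case), and that argument is sound.
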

\begin{proof}
Suppose that $u$ is a classical solution of  \eqref{adjointproblem}. Multiplying the wave equation \eqref{adjointproblem} by $u$ and integrating over
$(0,1)\times(0,T)$, we obtain
\begin{equation*}
\begin{aligned}
0 &=\int_{0}^{T}\!\!\!\int_{0}^{1} u(t, x)\left(u_{t t}(t, x)-\left(x^\alpha u_{x}(t, x)\right)_{x}-\frac{\mu}{x^{2-\alpha}}u(t,x)\right)\,dx\,dt \\
&=\left[\int_{0}^{1} u(t, x) u_{t}(t, x)\,dx\right]_{t=0}^{t=T}-\int_{0}^{T}\!\!\!\int_{0}^{1} u_{t}^{2}(t, x)\,dx\,dt \\
&-\int_{0}^{T}\left[x^\alpha u(t, x) u_{x}(t, x)\right]_{x=0}^{x=1}\,dt+\int_{0}^{T}\!\!\!\int_{0}^{1}x^\alpha u_{x}^{2}(t, x)\,dx\,dt-\int_{0}^{T}\!\!\!\int_{0}^{1}\frac{\mu}{x^{2-\alpha}}u^2(t,x)\,dx\,dt.
\end{aligned}
\end{equation*}
The conclusion follows from the above identity because $x^\alpha u(t, x) u_{x}(t, x)$ vanishes
at $x=0, 1$, owing to \cite[Proposition 2.5]{Alabau2016} and also Remark \ref{remark2}. An approximation argument allows usto extend the conclusion to mild solutions.
\end{proof}

We are now ready to prove the following inverse or observability inequality.
\begin{theorem}\label{observabilityresult}
Assume \eqref{basicass} holds and let $u$ be the mild solution of \eqref{adjointproblem}.
Then, for every $T> 0$,
\begin{equation}
\label{obineq}
\int_{0}^{T} u_{x}^{2}(t, 1)\,dt \geq \left\{\left(2-\alpha\right) T-4\right\} E_{u}(0).
\end{equation}
\end{theorem}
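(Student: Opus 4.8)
The plan is to combine the two multiplier identities \eqref{egaconti} and \eqref{lemega} with the conservation of energy \eqref{constenergy}, and then to control the leftover boundary term by means of the sharp Hardy-type inequality \eqref{keyineq} of Theorem \ref{keytool}. As in Lemma \ref{hiddenregularity}, I would first establish \eqref{obineq} for classical solutions (data in $D^{\mu}_{\alpha}\times H_{\alpha,0}^{1,\mu}(0,1)$) and then pass to mild solutions by the usual density argument.

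Write $A=\int_0^T\!\!\int_0^1 u_t^2\,dx\,dt$ and $B=\int_0^T\!\!\int_0^1\big(x^\alpha u_x^2-\frac{\mu}{x^{2-\alpha}}u^2\big)\,dx\,dt$ for the time-integrated kinetic and potential parts. Integrating \eqref{constenergy} over $(0,T)$ gives $\tfrac12(A+B)=T\,E_u(0)$, i.e.\ $A+B=2T\,E_u(0)$, while \eqref{lemega} reads $A-B=\big[\int_0^1 uu_t\,dx\big]_{t=0}^{t=T}=:Q$. Solving these two relations for $A,B$ and substituting into the right-hand side of \eqref{egaconti}, whose volume term is exactly $A+(1-\alpha)B$, I obtain the clean identity
\begin{equation*}
\int_0^T u_x^2(t,1)\,dt=(2-\alpha)\,T\,E_u(0)+\Big[\int_0^1\Big(2xu_x+\tfrac{\alpha}{2}u\Big)u_t\,dx\Big]_{t=0}^{t=T}.
\end{equation*}
Thus the theorem reduces to showing that the boundary functional $R(t):=\int_0^1\big(2xu_x+\tfrac{\alpha}{2}u\big)u_t\,dx$ satisfies the uniform bound $|R(t)|\le 2E_u(t)=2E_u(0)$; then $R(T)-R(0)\ge-4E_u(0)$ and \eqref{obineq} follows at once.

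The crux, and the step I expect to be the main obstacle, is precisely this bound on $R(t)$, which must hold with a constant independent of $\mu$ right up to the critical value $\mu(\alpha)$, so the crude constant $C_\alpha$ of \eqref{keyineq0} is not allowed. Here I would expand the combined multiplier: using the integration by parts $\int_0^1 xuu_x\,dx=-\tfrac12\int_0^1 u^2\,dx$ (legitimate since $u(1)=0$ and $xu^2\to0$ at $x=0^+$ by Lemmas \ref{lemma0}--\ref{lemma1}), one finds
\begin{equation*}
\int_0^1\Big(2xu_x+\tfrac{\alpha}{2}u\Big)^2\,dx=4\int_0^1 x^2u_x^2\,dx+\frac{\alpha(\alpha-4)}{4}\int_0^1 u^2\,dx.
\end{equation*}
Inserting \eqref{keyineq} to replace $\int_0^1 x^2u_x^2$ produces, after simplification, the coefficient $-\tfrac34(\alpha-2)^2\le0$ in front of $\int_0^1 u^2$; this favorable sign lets me discard the $L^2$-term and conclude $\int_0^1\big(2xu_x+\tfrac{\alpha}{2}u\big)^2\,dx\le 4\int_0^1\big(x^\alpha u_x^2-\mu(\alpha)\frac{u^2}{x^{2-\alpha}}\big)\,dx\le 4\|u\|_{H^{1,\mu}_{\alpha,0}}^2$, the last inequality using $\mu\le\mu(\alpha)$. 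Finally, Young's inequality with the balanced weight $\varepsilon=2$,
\begin{equation*}
|R(t)|\le\frac14\int_0^1\Big(2xu_x+\tfrac{\alpha}{2}u\Big)^2\,dx+\int_0^1 u_t^2\,dx\le\|u\|_{H^{1,\mu}_{\alpha,0}}^2+\int_0^1 u_t^2\,dx=2E_u(t),
\end{equation*}
yields precisely $|R(t)|\le 2E_u(0)$ with no spurious constant, completing the argument and treating both ranges $\alpha\in[0,1)$ and $\alpha\in(1,2)$ simultaneously.
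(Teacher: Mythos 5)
Your proposal is correct and takes essentially the same route as the paper's own proof: combining \eqref{egaconti}, \eqref{lemega} and energy conservation to reach the identity $\int_0^T u_x^2(t,1)\,dt=(2-\alpha)T\,E_u(0)+\bigl[R(t)\bigr]_{t=0}^{t=T}$ (the paper adds $\tfrac{\alpha}{2}$ times \eqref{lemega} to \eqref{egaconti}, which is algebraically identical to your elimination of $A$ and $B$), then bounding the multiplier term by expanding its square, using $\int_0^1 xuu_x\,dx=-\tfrac12\int_0^1u^2\,dx$, inserting \eqref{keyineq}, discarding the resulting nonpositive coefficient $-\tfrac34(\alpha-2)^2$ on the $L^2$ term, and concluding via $\|\cdot\|_{H^{1,\mu(\alpha)}_{\alpha,0}}\le\|\cdot\|_{H^{1,\mu}_{\alpha,0}}$ and Young's inequality. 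The only difference is cosmetic: you carry the multiplier $2xu_x+\tfrac{\alpha}{2}u$ where the paper uses $xu_x+\tfrac{\alpha}{4}u$.
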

\begin{proof} As usual, we suppose that $u$ is a classical solution of \eqref{adjointproblem}, since the case of mild solutions can be recovered by approximation arguments.
By adding to the right-hand side of  \eqref{egaconti} the left side of \eqref{lemega} multiplied by $\frac{\alpha}{2}$, we obtain
\begin{equation*}
\begin{aligned}
\int_{0}^T u_x^2(t,1)\,dt&=(1-\frac{\alpha}{2})\int_{0}^{T}\!\!\!\int_{0}^{1}\left\{u_{t}^{2}(t, x)+x^\alpha u_{x}^{2}(t,x)-\frac{\mu}{x^{2-\alpha}}u^2(t,x)\right\}\,dx\,dt \\
&\quad +2\left[\int_{0}^{1} x u_{x}(t, x) u_{t}(t, x)\,dx\right]_{t=0}^{t=T} +\frac{\alpha}{2}\left[\int_{0}^{1} u(t, x) u_{t}(t, x)\,dx\right]_{t=0}^{t=T}.
\end{aligned}
\end{equation*} 
Using the definition of $E_u$ and having in mind \eqref{constenergy}, this can be rewritten as
\begin{equation}\label{stimaa0}
\frac{1}{2}\int_{0}^T u_x^2(t,1)\,dt=(1-\frac{\alpha}{2})T E_u(0)
+\left[\int_{0}^{1} u_{t}(t, x)\left(x u_{x}(t, x) +\frac{\alpha}{4}u(t,x)\right)\,dx\right]_{t=0}^{t=T}.
\end{equation} 
We now estimate the last term on the right-hand side of the inequality \eqref{stimaa0}.
First we write
\begin{equation}\label{stma0}
\left|\int_{0}^{1} u_{t}(t, x)\left(x u_{x}(t, x) +\frac{\alpha}{4}u(t,x)\right)\,dx\right|  \leq \frac{1}{2} \|u_t\|^{2}_{L^2(0,1)} + \frac{1}{2}\|x u_{x} +\frac{\alpha}{4}u\|^{2}_{L^2(0,1)}.
\end{equation}
Next, we compute:
\begin{equation*}
\begin{aligned}
\|x u_{x} +\frac{\alpha}{4}u\|^{2}_{L^2(0,1)} &= \int_{0}^{1}x^2 u_x^2\,dx + \frac{\alpha}{4}\int_{0}^{1} x (u^2)_x\,dx + \frac{\alpha^2}{16}\int_{0}^{1} u^2\,dx\\
&=\int_{0}^{1}x^2 u_x^2\,dx + \left(\frac{\alpha^2}{16}- \frac{\alpha}{4}\right)\int_{0}^{1} u^2\,dx.
\end{aligned}
\end{equation*}
Using \eqref{keyineq}, it follows that
\begin{equation*}
\begin{aligned}
\|x u_{x} +\frac{\alpha}{4}u\|^{2}_{L^2(0,1)} & \leq \|u\|^{2}_{H^{1,\mu(\alpha)}_{\alpha,0}} +\left[\frac{(1-\alpha)(\alpha-3)}{4} + \frac{\alpha^2-4\alpha}{16}\right]\int_{0}^{1} u^2\,dx \\
&\leq \|u\|^{2}_{H^{1,\mu(\alpha)}_{\alpha,0}}.
\end{aligned}
\end{equation*}
Hence \eqref{stma0} becomes
\begin{equation*}
\left|\int_{0}^{1} u_{t}(t, x)\left(x u_{x}(t, x) +\frac{\alpha}{4}u(t,x)\right)\,dx\right|  \leq \frac{1}{2} \|u_t\|^{2}_{L^2(0,1)} + \frac{1}{2}\|u\|^{2}_{H^{1,\mu(\alpha)}_{\alpha,0}}.
\end{equation*}
Since $\|\cdot\|_{H^{1,\mu(\alpha)}_{\alpha,0}}\leq \|\cdot\|_{H^{1,\mu}_{\alpha,0}}$ ($\forall \mu \leq \mu(\alpha)$), we get
\begin{equation*}
\left|\int_{0}^{1} u_{t}(t, x)\left(x u_{x}(t, x) +\frac{\alpha}{4}u(t,x)\right)\,dx\right| \leq E_u(t)=E_u(0).
\end{equation*}
Combining this last estimate together with \eqref{stimaa0}, we obtain the observability inequality \eqref{obineq} with explicit constants.
\end{proof}

We recall that \eqref{adjointproblem} is said to be observable in time $T>0$ if there exists a constant $C>0$ such that for every $\left(u_{0}, u_{1}\right) \in$ $H_{\alpha,0}^{1,\mu}(0,1) \times L^{2}(0,1)$, the mild solution of \eqref{adjointproblem} satisfies
$$
\int_{0}^{T} u_{x}^{2}(t,1)\,dt \geq C E_{u}(0).
$$
To make sure that the constant $C>0$, we have to impose
$$
T>T_{\alpha}:=\frac{4}{2-\alpha}.
$$
Summarizing, the main observability result is as follows:
\begin{corollary}
Assume \eqref{basicass}. Then \eqref{adjointproblem} is observable in time $T$, provided that 
\begin{equation}\label{timeofcontrol}
T>T_{\alpha}.
\end{equation}
\end{corollary}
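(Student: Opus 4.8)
The plan is to read the result directly off the observability inequality established in Theorem \ref{observabilityresult}, since all the substantive work has already been carried out there. First I would recall the definition of observability in time $T$ stated just before the corollary: what is needed is a constant $C > 0$, independent of the initial data, such that every mild solution $u$ of \eqref{adjointproblem} associated with $(u_0, u_1) \in H_{\alpha,0}^{1,\mu}(0,1) \times L^2(0,1)$ satisfies $\int_0^T u_x^2(t,1)\,dt \geq C\, E_u(0)$.

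Next I would invoke Theorem \ref{observabilityresult}, which under hypothesis \eqref{basicass} furnishes exactly such an estimate with the explicit constant
$$
C = (2-\alpha)T - 4.
$$
The point worth emphasising is that this constant is produced by the sharp multiplier identity \eqref{stimaa0} together with the Hardy-type inequality \eqref{keyineq}, and that it is independent of $\mu \leq \mu(\alpha)$: the $\mu$-dependence was absorbed at the end of that proof through the monotonicity $\|\cdot\|_{H^{1,\mu(\alpha)}_{\alpha,0}} \leq \|\cdot\|_{H^{1,\mu}_{\alpha,0}}$, valid for all $\mu \leq \mu(\alpha)$.

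Finally I would simply impose positivity of this constant, namely $(2-\alpha)T - 4 > 0$. Since $\alpha \in [0,2)$ guarantees $2-\alpha > 0$, this is equivalent to $T > \tfrac{4}{2-\alpha} = T_\alpha$, which is precisely condition \eqref{timeofcontrol}. Under this condition, taking $C := (2-\alpha)T - 4 > 0$ yields the required observability estimate uniformly over all admissible initial data, and the proof concludes.

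I do not expect any genuine obstacle in this step: the entire difficulty was concentrated in Theorem \ref{observabilityresult}, whose proof fused the multiplier method with the new Hardy-type inequality. The only subtlety worth flagging is the \emph{uniformity} of the threshold $T_\alpha$ with respect to $\mu$, which is exactly what distinguishes this route from the subcritical-only argument sketched in the introduction and what allows the critical case $\mu = \mu(\alpha)$ to be reached as well.
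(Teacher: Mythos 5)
Your proposal is correct and follows exactly the paper's own reasoning: the corollary is an immediate consequence of the explicit constant $(2-\alpha)T-4$ in Theorem \ref{observabilityresult}, which is positive precisely when $T>T_{\alpha}=\frac{4}{2-\alpha}$. Your remark on the uniformity in $\mu\leq\mu(\alpha)$ matches the paper's intent as well, so nothing is missing.
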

\begin{remark}\label{remark3}
Notice that when  $\alpha= 0$, the system \eqref{boundarycontrolproblem} is a non-degenerate linear wave
equation perturbed by a singular inverse-square potential.
By the known controllability results in \cite{VanZua2009} and \cite{Cazacu2012}, the observability time is $T_0 = 2$, which coincides with the classical observability time
for the classical wave equation, see \cite{BLR1992}. Letting $\alpha$ tend to zero, one can find that $\displaystyle\lim_{\alpha \rightarrow 0} T_\alpha= T_0$ and thus the above result complements those in \cite{VanZua2009} and \cite{Cazacu2012} in the $1$-dimensional
case.
\end{remark}
\section{Boundary controllability}
In this section, we study the boundary controllability of the degenerate and singular wave equation \eqref{boundarycontrolproblem}.
Our main result guarantees the exact controllability of system \eqref{boundarycontrolproblem} under the condition \eqref{basicass}. More precisely, the main controllability result of this paper can be stated as follows.
\begin{theorem}\label{controllabilityresult}
Assume \eqref{basicass} holds. Then, for every  $T >T_{\alpha}$ and for any
$\left(y_{0}, y_{1}\right) \in L^{2}(0,1) \times H_{\alpha,0}^{-1,\mu}(0,1)$, there exists a control $f\in \mathrm{L}^2(0, T)$ such that the
solution of \eqref{boundarycontrolproblem} (in the sense of transposition) satisfies $\left(y, y_{t}\right)(T,.)=(0, 0)$.
\end{theorem}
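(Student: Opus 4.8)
The plan is to deploy the classical Hilbert Uniqueness Method (HUM), reducing the exact controllability statement to the observability inequality already established in Theorem~\ref{observabilityresult}. Since the problem is linear and time-reversible, by the remark in the introduction it suffices to prove null controllability, i.e.\ to find $f\in L^2(0,T)$ so that the transposition solution satisfies $(y,y_t)(T,\cdot)=(0,0)$. The natural functional setting is already in place: the adjoint system \eqref{adjointproblem} is well-posed in $H_{\alpha,0}^{1,\mu}(0,1)\times L^2(0,1)$, the energy is conserved, the hidden regularity of Lemma~\ref{hiddenregularity} guarantees $u_x(\cdot,1)\in L^2(0,T)$, and the observability inequality \eqref{obineq} holds whenever $T>T_\alpha=\frac{4}{2-\alpha}$.

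First I would fix $T>T_\alpha$ and define, on the Hilbert space $\mathcal{H}:=H_{\alpha,0}^{1,\mu}(0,1)\times L^2(0,1)$, the symmetric bilinear form
\begin{equation*}
\Lambda\big((u_0,u_1),(\tilde u_0,\tilde u_1)\big):=\int_0^T u_x(t,1)\,\tilde u_x(t,1)\,dt,
\end{equation*}
where $u$ and $\tilde u$ are the mild solutions of \eqref{adjointproblem} with the respective data. The direct inequality \eqref{directineq} shows that $\Lambda$ is bounded on $\mathcal{H}\times\mathcal{H}$, while the observability inequality \eqref{obineq} gives
\begin{equation*}
\Lambda\big((u_0,u_1),(u_0,u_1)\big)=\int_0^T u_x^2(t,1)\,dt\geq \big\{(2-\alpha)T-4\big\}E_u(0)\geq c\,\|(u_0,u_1)\|_{\mathcal{H}}^2,
\end{equation*}
with $c>0$ precisely because $T>T_\alpha$. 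Hence $\Lambda$ is coercive, and by the Lax--Milgram theorem it defines an isomorphism from $\mathcal{H}$ onto its dual. Given the target data $(y_0,y_1)\in L^2(0,1)\times H_{\alpha,0}^{-1,\mu}(0,1)$, the right-hand side built from the linear functional $\mathcal{L}$ already introduced in the transposition subsection belongs to $\mathcal{H}'$, so there exists a unique minimizer, equivalently a unique $(u_0,u_1)\in\mathcal{H}$; the control is then taken to be $f(t):=u_x(t,1)$, which lies in $L^2(0,T)$ by hidden regularity.

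It remains to verify that this $f$ actually steers the state to rest. The plan is to insert $f=u_x(\cdot,1)$ into the transposition identity \eqref{identityoftransposition}, test against arbitrary final data $(w_T^0,w_T^1)\in\mathcal{H}$, and use the Euler--Lagrange equation satisfied by the minimizer, namely $\Lambda\big((u_0,u_1),(w_T^0,w_T^1)\big)=\mathcal{L}(w_T^0,w_T^1)$ for all test data. Matching this against \eqref{identityoftransposition} forces the boundary terms at $t=T$ to vanish for every choice of $(w_T^0,w_T^1)$, which is exactly the statement $(y,y_t)(T,\cdot)=(0,0)$ in the weak sense dictated by the transposition formulation. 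I expect the main obstacle to be not the HUM machinery itself, which is standard, but the careful bookkeeping of the dual pairings between $H_{\alpha,0}^{-1,\mu}(0,1)$ and $H_{\alpha,0}^{1,\mu}(0,1)$ in the critical case $\mu=\mu(\alpha)$, where the space is genuinely enlarged; one must ensure that the change of variable $u(t,x)=w(T-t,x)$ relating \eqref{adjointproblem} and the backward system \eqref{backwardsystem} is applied consistently and that the isomorphism furnished by Lax--Milgram maps into the correct dual space so that $(y_0,y_1)$ is admissible.
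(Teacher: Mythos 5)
Your proposal is correct and follows essentially the same route as the paper: HUM via a bilinear form built from the boundary traces of adjoint solutions, continuity from the direct inequality \eqref{directineq}, coercivity from the observability inequality \eqref{obineq} for $T>T_\alpha$, Lax--Milgram to produce the data whose normal derivative serves as the control $f$, and the transposition identity \eqref{identityoftransposition} to conclude $(y,y_t)(T,\cdot)=(0,0)$. The only cosmetic difference is that you set up $\Lambda$ on the initial data of the forward adjoint system while the paper works with final data of the backward system \eqref{backwardsystem}; these are equivalent under the time reversal $u(t,x)=w(T-t,x)$ together with energy conservation, exactly as you note.
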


\begin{proof} The proof relies on the use of the Hilbert uniqueness method (HUM) introduced by J.-L. Lions in \cite{Lions1988}. Let $\left(y_{0}, y_{1}\right) \in L^{2}(0,1) \times H_{\alpha,0}^{-1,\mu}(0,1)$, $(w_T^0 , w_T^1 ), (v_T^0 , v_T^1 ) \in H_{\alpha,0}^{1,\mu}(0,1) \times L^{2}(0,1)$ be arbitrary pairs. Let $w$ and $v$ be the mild solutions of the backward problem \eqref{backwardsystem} with final conditions $W^T:=(w_T^0 , w_T^1 )$ and $V^T:=(v_T^0 , v_T^1 )$, respectively.
Let us consider the bilinear form $\Lambda$ on $H_{\alpha,0}^{1,\mu}(0,1) \times L^{2}(0,1)$ defined by
$$
\Lambda\left(W^{T}, V^{T}\right):= \int_{0}^{T} w_{x}(t, 1) v_{x}(t, 1)\,dt \quad \forall W^{T}, V^{T} \in H_{\alpha,0}^{1,\mu}(0,1) \times L^{2}(0,1).
$$
From the direct inequality \eqref{directineq}, it is clear that $\Lambda$ is continuous on $H_{\alpha,0}^{1,\mu}(0,1) \times L^{2}(0,1)$.
On the other hand, thanks to the observability inequality \eqref{obineq}, $\Lambda$ is coercive on $H_{\alpha,0}^{1,\mu}(0,1) \times L^{2}(0,1)$
provided $T > T_\alpha$.

Next, we define the continuous linear map
$$
\ell\left(V^{T}\right):=\left\langle y_{1}, v(0)\right\rangle_{H_{\alpha,0}^{-1,\mu}(0,1), H_{\alpha,0}^{1,\mu}(0,1)}-\int_{0}^{1} y_{0} v^{\prime}(0)\,dx, \quad \forall V^{T} \in H_{\alpha,0}^{1,\mu}(0,1) \times L^{2}(0,1).
$$
Since $\Lambda$ is continuous and coercive on $H_{\alpha,0}^{1,\mu}(0,1) \times L^{2}(0,1)$, and $\ell$ is continuous on
the Hilbert space $H_{\alpha,0}^{1,\mu}(0,1) \times L^{2}(0,1)$, by the
Lax-Milgram Lemma, the variational problem
$$
\Lambda\left(W^{T}, V^{T}\right)=\ell\left(V^{T}\right) \quad \forall V^{T} \in H_{\alpha,0}^{1,\mu}(0,1) \times L^{2}(0,1)
$$
has a unique solution $W_{T} \in H_{\alpha,0}^{1,\mu}(0,1) \times L^{2}(0,1)$.
Then setting $f=w_{x}(t, 1)$ and $T> T_\alpha$, where $w \in C([0,T]; H_{\alpha,0}^{1,\mu}(0,1)) \cap C^1([0,T];L^{2}(0,1))$ is the mild solution of the
backward problem \eqref{backwardsystem} with $(w_T^0 , w_T^1)$ as the final data, we see that
\begin{equation}\label{key1}
\begin{aligned}
&\int_{0}^{T} f(t) v_{x}(t, 1)\,dt= \int_{0}^{T} w_{x}(t, 1) v_{x}(t, 1)\,dt= \Lambda \left(W^{T}, V^{T}\right)  \\
&\quad =\left\langle y_{1}, v(0)\right\rangle_{H_{\alpha,0}^{-1,\mu}(0,1), H_{\alpha,0}^{1,\mu}(0,1)}-\int_{0}^{1} y_{0} v^{\prime}(0)\,dx, \quad\forall  V^{T}\in H_{\alpha,0}^{1,\mu}(0,1) \times L^{2}(0,1).
\end{aligned}
\end{equation}
On the other hand, if $y$ is the solution by transposition of the problem \eqref{boundarycontrolproblem}, then
equality \eqref{identityoftransposition} implies that, for all $V^{T}\in H_{\alpha,0}^{1,\mu}(0,1) \times L^{2}(0,1)$, we have
\begin{equation}\label{key2}
\begin{aligned}
-\int_{0}^{T} f(t) v_{x}(t, 1)\,dt &=\left\langle y^{\prime}(T), v_{T}^{0}\right\rangle_{H_{\alpha,0}^{-1,\mu}(0,1), H_{\alpha,0}^{1,\mu}(0,1)}-\int_{0}^{1} y(T) v_{T}^{1}\,dx \\
&\quad -\left\langle y_{1}, v(0)\right\rangle_{H_{\alpha,0}^{-1,\mu}(0,1), H_{\alpha,0}^{1,\mu}(0,1)}+\int_{0}^{1} y_{0} v^{\prime}(0)\,dx.
\end{aligned}
\end{equation}
Comparing the last relations \eqref{key1}-\eqref{key2}, it follows that
$$
\left\langle y^{\prime}(T), v_{T}^{0}\right\rangle_{H_{\alpha,0}^{-1,\mu}(0,1), H_{\alpha,0}^{1,\mu}(0,1)}-\int_{0}^{1} y(T) v_{T}^{1}\,dx=0, \quad \forall\left(v_{T}^{0}, v_{T}^{1}\right) \in H_{\alpha,0}^{1,\mu}(0,1) \times L^{2}(0,1).
$$
From this we finally deduce that $\left(y(T), y^{\prime}(T)\right)\equiv(0,0)$, i.e. the system \eqref{boundarycontrolproblem} is boundary null controllable
in time $T > T_\alpha$. 
\end{proof}
\section{Conclusions and open problems}
In this paper, we have analyzed the boundary controllability of the $1$-D degenerate/singular wave equation. By means of the multiplier method, we have shown that the equation is
observable. As a consequence, applying the Hilbert uniqueness method, we deduced the exact controllability result when the control
acts on the nondegenerate/nonsingular boundary. Moreover, the optimal time of controllability was given.

We present hereafter a non-exhaustive list of comments and open problems related to our work.

\begin{enumerate}
\item As a first thing, we point out that combining our proofs with the ideas of \cite{Van2011}, boundary exact controllability result
can be obtained for the case of a degenerate/singular operator with $\frac{\mu}{x^{\beta}}$ with $\beta \leq 2-\alpha$ instead of $\frac{\mu}{x^{2-\alpha}}$. 
\item In \cite{Alabau2016}, the authors treat the case of a degenerate operator $(a(x)u_x)_x$ with a general coefficient $a(x)$ that vanishes at $x = 0$. It would be interesting to consider a simultaneously degenerate and singular equation with a general degenerate inhomogeneous speed and general singular potential. 
\item Inspired by the results in \cite{KKL}, it would also be interesting to study the wave equation with degeneracy and singularity at the interior of the space domain as done in \cite{FM2016} for the heat equation.
\item Finally, the study of null controllability properties of degenerate or singular coupled wave equations is still to be done and many further directions
remain to be investigated.
\end{enumerate}

\section{Appendix}\label{Appendix}
This section is devoted to the proof of Theorem \ref{keytool} which is inspired by \cite[Theorem 1.1]{VanZua2009}.
The main point in the proof is the following change of variables
$$U(x)=x^{\frac{\alpha-1}{2}}u(x),\quad i.e.,\quad u(x)=x^{\frac{1-\alpha}{2}}U(x).$$
Next we compute
\begin{equation*}
\begin{aligned}
\int_{0}^1x^{2} u_x^{2}\,dx &=\int_{0}^1 x^{2}\left(\frac{1-\alpha}{2}x^{\frac{-1-\alpha}{2}}U+x^{\frac{1-\alpha}{2}}U_x\right)^{2}\,dx \\
&=\int_{0}^1\left( \left(\frac{1-\alpha}{2}\right)^2x^{1-\alpha}U^2+x^{3-\alpha}U_x^2 +(1-\alpha)x^{2-\alpha}\left(\frac{U^2}{2}\right)_x\right)\,dx  \\
&=\int_{0}^1\left( \left(\frac{1-\alpha}{2}\right)^2x^{1-\alpha}U^2+x^{3-\alpha}U_x^2 -\frac{(1-\alpha)(2-\alpha)}{2}x^{1-\alpha}U^2\right)\,dx.
\end{aligned}
\end{equation*}
On the other hand, we have 
\begin{equation*}
\begin{aligned}
\int_{0}^1 x^{\alpha} u_x^{2}\,dx &=\int_{0}^1 x^{\alpha}\left(\frac{1-\alpha}{2}x^{\frac{-1-\alpha}{2}}U+x^{\frac{1-\alpha}{2}}U_x\right)^{2}\,dx \\
&=\int_{0}^1\left( \left(\frac{1-\alpha}{2}\right)^2x^{-1}U^2+xU_x^2 +(1-\alpha)\left(\frac{U^2}{2}\right)_x\right)\,dx  \\
&=\int_{0}^1\left( \left(\frac{1-\alpha}{2}\right)^2x^{-1}U^2+xU_x^2\right)\,dx. 
\end{aligned}
\end{equation*}
Therefore,
\begin{equation*}
\begin{aligned}
\int_{0}^1\left( x^{\alpha} u_x^{2}-\mu(\alpha)\frac{u^2}{x^{2-\alpha}}\right)\,dx&=\int_{0}^1\left( \left(\frac{1-\alpha}{2}\right)^2x^{-1}U^2+xU_x^2-\left(\frac{1-\alpha}{2}\right)^2\frac{x^{1-\alpha}}{x^{2-\alpha}}U^2\right)\,dx\\
&=\int_{0}^1 xU_x^2\,dx.
\end{aligned}
\end{equation*}
Hence \eqref{keyineq} may be rewritten exactly as follows:
\begin{equation*}
\int_{0}^1x^{3-\alpha}U_x^2\,dx \leq \int_{0}^1 xU_x^2\,dx.
\end{equation*}
And this inequality is trivially true by the definition of $\alpha$.


\end{document}